\documentclass[a4paper]{amsart}

\usepackage{physics}
\usepackage{amsfonts}
\usepackage{amsthm}
\usepackage{amssymb}
\usepackage{mathrsfs}
\usepackage{geometry}
\usepackage{graphicx}
\usepackage{subfig}
\usepackage{caption}
\usepackage{diagbox}
\usepackage{booktabs}

\DeclareMathOperator{\SPAN}{span}

\DeclareMathOperator{\INT}{int}
\DeclareMathOperator{\BD}{bd}

\DeclareMathOperator{\VOL}{vol}


\renewcommand{\norm}[1]{\left\Vert#1\right\Vert}
\newcommand{\origin}{o}

\newcommand{\DIAM}[1]{\delta\left(#1\right)}

\newcommand{\SET}[2]{\qty{#1\mid #2}}

\newcommand{\PI}{\mathbb{Z}^{+}}

\newcommand{\SCB}[1][n]{\mathcal{K}^{#1}}

\newcommand{\R}{\mathbb{R}}
\newcommand{\E}{\mathbb{E}}

\newtheorem{thm}{Theorem}
\newtheorem{cor}[thm]{Corollary}
\newtheorem{lem}[thm]{Lemma}
\newtheorem{prop}[thm]{Proposition}
\newtheorem{conj}{Conjecture}
\newtheorem{prob}{Problem}
\theoremstyle{definition}

\newtheorem{rem}[thm]{Remark}


\title{Divide bounded sets into sets having smaller diameters}
\author{Yanlu Lian}
\email{yanlu\_lian@tju.edu.cn}

\author{Senlin Wu}
\email{wusenlin@nuc.edu.cn}

\begin{document}
\maketitle

\begin{abstract}
  For each positive integer $m$ and each real finite dimensional Banach space
  $X$, we set $\beta(X,m)$ to be the infimum of $\delta\in (0,1]$ such that each
  set $A\subset X$ having diameter $1$ can be represented as the union of $m$
  subsets of $A$ whose diameters are at most $\delta$. Elementary properties of
  $\beta(X,m)$, including its stability with respect to $X$ in the sense of
  Banach-Mazur metric, are presented. Two methods for estimating $\beta(X,m)$
  are introduced. The first one estimates $\beta(X,m)$ using the knowledge of
  $\beta(Y,m)$, where $Y$ is a Banach space sufficiently close to $X$. The
  second estimation uses the information about $\beta_X(K,m)$, the infimum of
  $\delta\in(0,1]$ such that $K\subset X$ is the union of $m$ subsets having
  diameters not greater than $\delta$ times the diameter of $K$, for certain
  classes of convex bodies $K$ in $X$. In particular, we show that
  $\beta(l_p^3,8)\leq 0.925$ holds for each $p\in [1,+\infty]$ by applying the
  first method, and we proved that $\beta(X,8)<1$ whenever $X$ is a
  three-dimensional Banach space satisfying $\beta_X(B_X,8)<\frac{221}{328}$,
  where $B_X$ is the unit ball of $X$, by applying the second method. These
  results and methods are closely related to the extension of Borsuk's problem
  in finite dimensional Banach spaces and to C. Zong's computer program for
  Borsuk's conjecture.
\end{abstract}

\section{Introduction}
\label{sec:introduction}
Let $X=(\R^n,\norm{\cdot})$ be an $n$-dimensional Banach space with unit ball
$B_X$. For each $A\subset X$, we denote by $\INT{A}$ and $\BD{A}$ the
\emph{interior} and the \emph{boundary} of $A$, respectively. When $A$ is
nonempty and bounded, we denote by $\DIAM{A}$ the \emph{diameter} of $A$. I.e.,
$\DIAM{A}=\sup\SET{\norm{x-y}}{x,y\in A}$. A compact convex subset of $X$ having
interior points is called a \emph{convex body}. Since any two norms on $\R^n$
induce the same topology, $K$ is a convex body (is bounded, resp.) in $X$ if and
only if it is a convex body (is bounded, resp.) in $\E^n=(\R^n,\norm{\cdot}_E)$,
where $\norm{\cdot}_E$ is the regular Euclidean norm. Let $\mathcal{K}^n$ be the
set of all convex bodies in $\E^n$. A bounded subset $A$ of $X$ is said to be
\emph{complete} if $x\not\in A\Rightarrow \DIAM{(A\cup\qty{x})}>\DIAM{A}$. It is
clear that each complete set is closed and convex. For each bounded subset $A$
of $X$, there always exists a complete set $A^c$, called a \emph{completion} of
$A$, with diameter $\DIAM{A}$ containing $A$. Note that $A$ may have different
completions. Put
\begin{displaymath}
  \mathcal{B}^n=\SET{A\subset \R^n}{\text{$A$ is bounded and }\DIAM{A}>0},\quad \mathcal{C}_X=\SET{A\in \mathcal{B}^n}{A\text{ is complete}}.
\end{displaymath}

In 1933, Borsuk \cite{Borsuk1933} proposed the following:
\begin{prob}[Borsuk's Problem]\label{prob:Borsuk}
  Is it possible to divide every bounded subset of $\E^n$ into $n+1$ sets having
  smaller diameters?
\end{prob}

The answer is affirmative when $n\leq 3$ (cf. \cite{Eggleston1955},
\cite{Grunbaum1957}, \cite{Heppes-Revesz1956}, or \cite{Heppes1957}), is
negative in each of the following cases: $n\geq 298$ (cf.
\cite{Hinrichs-Richter2003}), $n\geq 65$ (cf. \cite{Bondarenko2014}), and $n=64$
(cf. \cite{Jenrich-Brouwer2014}), and is not clear in other cases.

Progesses have been made by providing upper bounds for
$b(n)=\sup\SET{b(A)}{A\in\mathcal{B}^n}$, where $b(A)$, called the \emph{Borsuk
  number of $A$}, is the minimal positive integer $m$ such that $A$ is the union
of $m$ subsets having smaller diameter. For example, L. Danzer (cf.
\cite{Danzer1961}), M. Lassak (cf. \cite{Lassak1982}), and O. Schramm (cf.
\cite{Schramm1988} and \cite{Bourgain-Lindenstrauss1991}) showed that
\begin{displaymath}
  b(n)\leq\sqrt{\frac{(n+2)^3(2+\sqrt{2})^{n-1}}{3}},\quad b(n)\leq
  2^{n-1}+1,\qqtext{and} b(n)\leq 5n^{\frac{3}{2}}(4+\log n)\qty(\frac{3}{2})^{\frac{n}{2}},
\end{displaymath}
respectively.

One can also study Problem \ref{prob:Borsuk} via estimating
$\beta(n,m)=\sup\SET{\beta(A,m)}{A\in\mathcal{B}^n}$, where $m$ is a positive
integer and, for each $A\in\mathcal{B}^n$,
\begin{displaymath}
  \beta(A,m)=\inf\SET{\frac{1}{\DIAM{A}}\max\limits_{k\in[m]}\DIAM{A_k}}{A=\bigcup\limits_{k=1}^m
  A_k}.
\end{displaymath}
Here we used the shorthand notation $[m]:=\SET{i\in\PI}{1\leq i\leq m}$.  

In 2020, C. Zong (cf. \cite{zong2020}) proved that $\beta(A,m)$ is uniformly continuous on the space
$\mathcal{K}^n$ endowed with the Hausdorff metric, and reformulated Problem \ref{prob:Borsuk} as the
following:
\begin{prob}
  Does there exists a positive number $\alpha_n<1$ such that
  \begin{displaymath}
    \beta(K,n+1)\leq \alpha_n,~\forall K\in\mathcal{D}^n,
  \end{displaymath}
  where
  \begin{displaymath}
    \mathcal{D}^n=\SET{K\in\mathcal{K}^n}{B_2^n\subseteq K\subseteq \frac{\sqrt{n/(2n+2)}}{1-\sqrt{n/(2n+2)}}B_2^n}?
  \end{displaymath}
\end{prob}

Some known estimations of $\beta(n,m)$ are listed in Table \ref{tab:estimations}
below (cf., \cite{Grunbaum1957}, \cite{Kupavskii-Raigorodskii2010},
\cite{Filimonov2010}, and \cite{Filimonov2014}).

\begin{table}[h]
  \centering
  \begin{tabular}{c|c|c|c|c|c|c}
    \toprule
    \diagbox{$n$}{$\beta(n,m)$}{$m$}& $3$ & $4$ & $5$ & $6$ & $7$ & $8$\\
    \midrule
    $2$ & $\frac{\sqrt{3}}{2}$ & $\frac{\sqrt{2}}{2}$ & $\geq \sin\frac{\pi}{5}$ & $\leq \frac{\sqrt{3}}{3}$ & $\frac{1}{2}$ & ?\\
    \midrule
    $3$ & $-$ &$\in[0.8880,0.9887]$ & $\in\qty[\frac{\sqrt3}{2},0.9425]$ & $\geq \frac{\sqrt{6}}{3}$ & $\geq \frac{\sqrt{2}}{2}$ & $\geq \frac{\sqrt{2}}{2}$\\
    \bottomrule
  \end{tabular}
  \caption{Known estimations on $\beta(n,m)$}
  \label{tab:estimations}
\end{table}

Gr\"unbaum extended Borsuk's problem into Banach spaces and asked the following (cf. \cite{Grunbaum1957-1958}):
\begin{prob}\label{prob:Banach-version}
  Let $A\subset X=(\R^n,\norm{\cdot})$. What is the smallest positive integer
  $m$, denoted by $b_X(A)$, such that $A$ can be represented as the union of $m$ sets having smaller diameter.
\end{prob}
Put
\begin{gather*}
  b(X)=\max\SET{b_X(A)}{A\in\mathcal{B}^n}=\max\SET{b_X(A)}{A\in\mathcal{K}^n},\\
  B(n)=\max\SET{b((\R^n,\norm{\cdot}))}{\norm{\cdot}\text{ is a norm on $\R^n$}}.
\end{gather*}
It is clear that
\begin{equation}
  \label{eq:covering-number-as-upperbound}
  b_X(K)\leq c(K),~\forall K\in\mathcal{K}^n,
\end{equation}
where $c(K)$ is the least number of smaller homothetic copies of $K$ needed to
cover $K$. Since Hadwiger's covering conjecture (see, e.g.,
\cite{Boltyanski-Martini-Soltan1997}, \cite{Martini-Soltan1999},
\cite{Brass-Moser-Pach2005}, \cite{Zong2010}, \cite{Bezdek-Khan2018}) asserts
that $c(K)\leq 2^n,~\forall K\in \mathcal{K}^n$, it is reasonable to make the
following conjecture (cf. \cite[p. 75]{Boltyanski-Gohberg1985}):
\begin{conj}\label{conj:normed}
  For each integer $n\geq 3$, $B(n)=2^n$.
\end{conj}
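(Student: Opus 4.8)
The plan is to prove the two inequalities $B(n)\ge 2^n$ and $B(n)\le 2^n$ separately, and to be candid that the second one is where the whole difficulty sits.

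For the lower bound $B(n)\ge 2^n$ it suffices to produce one norm and one bounded set that cannot be covered by fewer than $2^n$ sets of smaller diameter. The canonical example is the cube $C=[-1,1]^n$ taken as the unit ball of $X=l_\infty^n$. Here $\DIAM{C}=2$, and any two distinct vertices of $C$ differ in at least one coordinate, in which the difference is exactly $2$; hence the $2^n$ vertices of $C$ are pairwise at distance $\DIAM{C}$. Therefore every subset of $C$ of diameter strictly less than $2$ contains at most one vertex, so any decomposition of $C$ into pieces of smaller diameter uses at least $2^n$ of them. This gives $b_X(C)\ge 2^n$, hence $b(X)\ge 2^n$ and $B(n)\ge 2^n$. (Splitting $C$ into its $2^n$ congruent subcubes shows $b_X(C)=2^n$, so the example is tight, though only the inequality is needed.)

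For the upper bound $B(n)\le 2^n$ I would first reduce from arbitrary bounded sets to convex bodies. Given $A\in\mathcal{B}^n$ in a norm $\norm{\cdot}$, choose a completion $A^c\supseteq A$; it is a convex body with $\DIAM{A^c}=\DIAM{A}$, and if $A^c$ is covered by $m$ sets of diameter $<\DIAM{A^c}$ then their traces on $A$ cover $A$ by $m$ sets of diameter $<\DIAM{A}$, so $b_X(A)\le b_X(A^c)\le c(A^c)$ by \eqref{eq:covering-number-as-upperbound}. Hence it would be enough to prove that $c(K)\le 2^n$ for every complete body $K$ in every $n$-dimensional normed space.

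The main obstacle is precisely this last step: the bound $c(K)\le 2^n$ for all convex bodies $K$ is Hadwiger's covering conjecture, which is open for every $n\ge 3$, and even its restriction to complete bodies is not available in general. So an unconditional proof of Conjecture \ref{conj:normed} is out of reach by present methods; what is realistic is either (i) deducing it from Hadwiger's conjecture, for which the completion reduction above already suffices, or (ii) settling small dimensions — above all $n=3$, where the target is $B(3)=8$ — by the quantitative route of this paper: relax $b_X(A)\le 8$ to an estimate of the form $\beta_X(K,8)<1$ for the relevant bodies $K$, and then bound $\beta_X(K,8)$ via the Banach–Mazur stability estimate and the two methods announced in the abstract. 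I expect the genuinely hard input to remain a covering or partition bound for the unit ball and its completions in an arbitrary three-dimensional norm.
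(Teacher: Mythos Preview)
The statement is a \emph{conjecture}, and the paper does not prove it; immediately after stating it the paper says ``Conjecture \ref{conj:normed} is still open when $n\geq 3$.'' So there is no ``paper's own proof'' to compare against, and your proposal is not a proof either --- nor do you claim it is. Your assessment is accurate: the lower bound $B(n)\ge 2^n$ via the vertex set of the $l_\infty^n$ cube is correct and standard, and your reduction of the upper bound to $c(K)\le 2^n$ for complete bodies is exactly the reasoning the paper uses to motivate the conjecture (see \eqref{eq:covering-number-as-upperbound} and the sentence invoking Hadwiger's conjecture). You are right that this step is open, and the paper confirms it.

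One small refinement: your reduction shows that Hadwiger's conjecture \emph{implies} $B(n)\le 2^n$, but you phrase the obstacle as needing Hadwiger only for complete bodies. That restricted statement is indeed what is needed, and it is genuinely weaker a priori, but it is also not known for $n\ge 3$. The paper's contribution is precisely the ``quantitative route'' you sketch in (ii): it does not attack the conjecture in full but instead bounds $\beta(l_p^3,8)$ away from $1$ and gives a criterion (Corollary \ref{cor:estimation-via-ball}) under which $\beta(X,8)<1$ for a given three-dimensional $X$.
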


When $X$ is two-dimensional and $A\in\mathcal{B}^2$, $b_X(A)\in\qty{2,3,4}$ (cf.
\cite[\S 33]{Boltyanski-Martini-Soltan1997}). Therefore, $B(2)=4$. L. Yu and C.
Zong \cite{Yu-Zong2009} proved that
\begin{equation}\label{eq:Yu-Zong}
  b(l_p^3)\leq 8,~\forall p\in[1,+\infty].
\end{equation}
By the main result of \cite{Huang-Slomka-Tkocz-Vritsiou2020} and
\eqref{eq:covering-number-as-upperbound}, there exist universal constants $c_1$
and $c_2 > 0$ such that $B(n)\leq c_1 4^n e^{-c_2\sqrt{n}},~\forall n\geq 2$.
Despite of these progress, Conjecture \ref{conj:normed} is still open when
$n\geq 3$.

\bigskip

In this paper, we study Conjecture \ref{conj:normed} by estimating
\begin{displaymath}
  \beta_X(A,m)=\inf\SET{\frac{1}{\DIAM{A}}\max\SET{\DIAM{A_k}}{k\in[m]}}{A=\bigcup\limits_{k=1}^m A_k}
\end{displaymath}
for $A\in\mathcal{B}^n$, and
\begin{displaymath}
  \beta(X,m)=\sup\SET{\beta_X(A,m)}{A\in\mathcal{B}^n}.
\end{displaymath}
We focus mainly, but not only, on the case $n=3$.

In Section \ref{sec:elementary-prop}, we present elementary properties of
$\beta(X,m)$, including its stability with respect to $X$ in the sense of
Banach-Mazur metric. In Section \ref{sec:beta-space-via-ball}, we provide an
estimation of $\beta(X,8)$ for three-dimensional Banach spaces $X$ such that
$\beta_X(B_X,8)$ is sufficiently small. In Section \ref{sec:lp}, we show that
\begin{displaymath}
  \beta(l_p^3,8)\leq 0.925,~\forall p\in [1,+\infty],
\end{displaymath}
which can be viewed as a quantitative version of Yu and Zong's result \eqref{eq:Yu-Zong}.

\section{Elementary properties of $\beta(X,m)$}
\label{sec:elementary-prop}

\begin{prop}
  For each finite dimensional Banach space $X=(\R^n,\norm{\cdot})$ and each positive integer $m$, we
  have
  \begin{displaymath}
    \beta(X,m)=\sup\SET{\beta_X(A,m)}{A\in\mathcal{C}_X}.
  \end{displaymath}
\end{prop}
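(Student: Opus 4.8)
The plan is to prove two inequalities. Since $\mathcal{C}_X \subseteq \mathcal{B}^n$, one direction, namely $\sup\{\beta_X(A,m) \mid A \in \mathcal{C}_X\} \leq \beta(X,m)$, is immediate from the definition of $\beta(X,m)$ as a supremum over the larger class $\mathcal{B}^n$. So the work lies in showing the reverse inequality $\beta(X,m) \leq \sup\{\beta_X(A,m) \mid A \in \mathcal{C}_X\}$.

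For the reverse inequality, I would fix an arbitrary $A \in \mathcal{B}^n$ and show $\beta_X(A,m) \leq \sup\{\beta_X(C,m) \mid C \in \mathcal{C}_X\}$. The natural move is to pass to a completion: by the discussion in the introduction, there is a complete set $A^c$ with $A \subseteq A^c$ and $\DIAM{A^c} = \DIAM{A}$. Now I would argue that $\beta_X(A,m) \leq \beta_X(A^c,m)$. Indeed, given any covering $A^c = \bigcup_{k=1}^m C_k$ with $\max_k \DIAM{C_k} \leq \delta \DIAM{A^c}$, the sets $A_k := C_k \cap A$ form a covering of $A$ with $\DIAM{A_k} \leq \DIAM{C_k} \leq \delta \DIAM{A^c} = \delta \DIAM{A}$; taking the infimum over all such coverings of $A^c$ gives $\beta_X(A,m) \leq \beta_X(A^c,m)$. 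Since $A^c \in \mathcal{C}_X$, this yields $\beta_X(A,m) \leq \sup\{\beta_X(C,m) \mid C \in \mathcal{C}_X\}$, and taking the supremum over $A \in \mathcal{B}^n$ completes the proof.

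There is essentially no serious obstacle here; the only point requiring a little care is the bookkeeping between the two normalizations (dividing by $\DIAM{A}$ versus $\DIAM{A^c}$), which works out cleanly precisely because completion preserves the diameter — this is the one place the completeness hypothesis is used. One should also note the (trivial) observation that $\DIAM{A} > 0$ implies $\DIAM{A^c} > 0$, so that $A^c \in \mathcal{B}^n$ and the normalization is well-defined, and that a completion exists for every bounded set, as recalled in the introduction. I would present the argument in roughly the order above: dispose of the easy inequality first, then fix $A$, invoke the completion $A^c$, prove the monotonicity $\beta_X(A,m) \leq \beta_X(A^c,m)$ via intersecting an optimal-up-to-$\varepsilon$ cover of $A^c$ with $A$, and conclude.
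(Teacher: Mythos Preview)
Your proposal is correct and follows essentially the same argument as the paper: the paper also reduces to showing $\beta(X,m)\leq\sup\{\beta_X(A,m)\mid A\in\mathcal{C}_X\}$, fixes $A\in\mathcal{B}^n$, passes to a completion $A^c$, and intersects an $\varepsilon$-near-optimal cover of $A^c$ with $A$ to obtain $\beta_X(A,m)\leq\beta_X(A^c,m)$.
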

\begin{proof}
  Put $\beta=\sup\SET{\beta_X(A,m)}{A\in\mathcal{C}_X}$. We only need to show
  that $\beta(X,m)\leq \beta$. Let $A$ be an arbitrary set in $\mathcal{B}^n$,
  and $A^c$ be a completion of $A$. For each $\varepsilon>0$, there exists a
  collection $\SET{B_i}{i\in[m]}$ of subsets of $A^c$ such that
  \begin{displaymath}
    A^c=\bigcup\limits_{i\in[m]} B_i,\qqtext{and} \frac{1}{\DIAM{A}}\max\SET{\delta_X(B_k)}{k\in[m]}\leq \beta_X(A^c,m)+\varepsilon.
  \end{displaymath}
  It follows that
  \begin{displaymath}
    A=A\cap A^c=\bigcup\limits_{i\in[m]} \qty(A\cap B_i).
  \end{displaymath}
  Thus
  \begin{displaymath}
    \beta_X(A,m)\leq \frac{1}{\DIAM{A}}\max\SET{\DIAM{A\cap B_k}}{k\in[m]}\leq \beta_X(A^c,m)+\varepsilon,
  \end{displaymath}
  which implies that $\beta_X(A,m)\leq \beta_X(A^c,m)$. Thus $\beta(X,m)\leq
  \beta$ as claimed.
\end{proof}

Let $\mathcal{T}^n$ be the set of all non-singular linear transformations on
$\R^n$. The (multiplicative) Banach-Mazur metric $d_{BM}^M:~\mathcal{K}^n
\times \mathcal{K}^n \mapsto \mathbb{R}$ is defined by
\begin{displaymath}
  d_{BM}^M(K_1, K_2)=\inf\SET{\gamma\geq 1}{\exists T\in \mathcal{T}^n, v\in
    \mathbb{R}^n \text{ s.t. } T(K_2)\subset K_1 \subset \gamma T(K_2)+v},~\forall K_1,K_2\in\SCB.
\end{displaymath}
The infimum is clearly attained. When both $K_1$ and $K_2$ are symmetric with
respect to $\origin$, we have
\begin{displaymath}
  d_{BM}^M(K_1, K_2)=\inf\SET{\gamma\geq 1}{\exists T\in \mathcal{T}^n\text{
      s.t. } T(K_2)\subset K_1 \subset \gamma T(K_2)}.
\end{displaymath}
In this situation, $d_{BM}^M(K_1, K_2)$ equals to the Banach-Mazur distance
between the Banach spaces $X$ and $Y$ having $K_1$ and $K_2$ as unit balls,
respectively. I.e.,
\begin{displaymath}
  d_{BM}^M(K_1, K_2)=d_{BM}^M(X, Y):=\inf\SET{\norm{T}\cdot\norm{T^{-1}}}{T\text{ is an
      isomorphism from $X$ onto $Y$}}.
\end{displaymath}

We have the following result showing the stability of $\beta(X,m)$ with respect
to $X$ in the sense of Banach-Mazur metric.
\begin{thm}\label{thm:stability}
  If $X=(\R^n,\norm{\cdot}_X)$ and $Y=(\R^n,\norm{\cdot}_Y)$ are two Banach
  space satisfying $d_{BM}^M(X,Y)\leq \gamma$ for some $\gamma\geq 1$, then
  \begin{displaymath}
    \beta(X,m)\leq \gamma\beta(Y,m),~\forall m\in\PI.
  \end{displaymath}
\end{thm}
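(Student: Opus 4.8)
\emph{Proof proposal.} The plan is to reduce the statement to a comparison of diameters measured in two norms that differ by a multiplicative factor of at most $\gamma$, together with the auxiliary fact that $\beta(\cdot,m)$ is invariant under linear isometries. Since $B_X$ and $B_Y$ are both symmetric about $\origin$, the hypothesis $d_{BM}^M(X,Y)\le\gamma$ yields, by the symmetric form of the Banach--Mazur metric recorded above, some $T\in\mathcal{T}^n$ with $T(B_Y)\subseteq B_X\subseteq\gamma T(B_Y)$. First I would introduce the Banach space $Z=(\R^n,\norm{\cdot}_Z)$ whose unit ball is $T(B_Y)$. The map $T$ is then a linear isometry from $Y$ onto $Z$, so for every $S\subseteq\R^n$ one has $\delta_Z(S)=\delta_Y(T^{-1}S)$, and any representation $S=\bigcup_{k\in[m]}S_k$ pulls back through $T^{-1}$ to a representation of $T^{-1}S$ with the same $Y$-diameters; consequently $\beta_Z(S,m)=\beta_Y(T^{-1}S,m)$, and since $T^{-1}$ maps $\mathcal{B}^n$ bijectively onto itself, taking suprema gives $\beta(Z,m)=\beta(Y,m)$.

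Next I would translate the ball inclusions into comparisons of norms, hence of diameters: $T(B_Y)\subseteq B_X$ gives $\norm{x}_X\le\norm{x}_Z$ for all $x\in\R^n$, while $B_X\subseteq\gamma T(B_Y)$ gives $\norm{x}_Z\le\gamma\norm{x}_X$ for all $x\in\R^n$; therefore every bounded $S\subseteq\R^n$ satisfies $\delta_X(S)\le\delta_Z(S)\le\gamma\,\delta_X(S)$. Now fix an arbitrary $A\in\mathcal{B}^n$ and $\varepsilon>0$. Using $\beta_Z(A,m)\le\beta(Z,m)=\beta(Y,m)$, choose a decomposition $A=\bigcup_{k\in[m]}A_k$ with $\max_{k\in[m]}\delta_Z(A_k)\le(\beta(Y,m)+\varepsilon)\,\delta_Z(A)$. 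Then for each $k\in[m]$,
\begin{displaymath}
  \delta_X(A_k)\le\delta_Z(A_k)\le(\beta(Y,m)+\varepsilon)\,\delta_Z(A)\le\gamma(\beta(Y,m)+\varepsilon)\,\delta_X(A),
\end{displaymath}
so that $\beta_X(A,m)\le\gamma(\beta(Y,m)+\varepsilon)$ (division by $\delta_X(A)>0$ is legitimate, as $\delta_X(A)>0$ whenever $\DIAM{A}>0$). Letting $\varepsilon\downarrow 0$ and taking the supremum over $A\in\mathcal{B}^n$ yields $\beta(X,m)\le\gamma\beta(Y,m)$, as required.

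I do not anticipate a genuine obstacle; the argument is essentially bookkeeping. The two points that require care are keeping the directions of the inequalities straight --- larger balls correspond to smaller norms, which is why $T(B_Y)\subseteq B_X$ weakens the norm while $B_X\subseteq\gamma T(B_Y)$ controls it from the other side --- and making explicit the elementary but indispensable invariance $\beta(Z,m)=\beta(Y,m)$ under the linear isometry $T$, without which one could not transfer the information about $Y$ to the space $Z$ whose unit ball is literally sandwiched between $\gamma^{-1}B_X$ and $B_X$.
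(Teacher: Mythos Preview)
Your proof is correct and follows essentially the same route as the paper's: reduce via a linear change of coordinates to the situation $B_Y\subseteq B_X\subseteq\gamma B_Y$ (you make this step explicit by introducing the isometric copy $Z$, the paper simply says ``by applying a suitable linear transformation if necessary''), derive the two-sided comparison $\delta_X\le\delta_Y\le\gamma\delta_X$, and transport a near-optimal $Y$-decomposition to an $X$-decomposition. The paper additionally passes through a completion $A^c$ of $A$ in $Y$ before decomposing, but that detour is unnecessary and your direct argument is slightly cleaner.
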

\begin{proof}
  By applying a suitable linear transformation if necessary, we may assume that
  \begin{displaymath}
    B_Y\subseteq B_X\subseteq\gamma B_Y.
  \end{displaymath}
  In this situation we have, for each $x\in\R^n$,
  \begin{align*}
    \norm{x}_X=\inf\SET{\lambda>0}{x\in\lambda B_X}&\leq \inf\SET{\lambda>0}{x\in\lambda B_Y}\\
                                                   &=\norm{x}_Y\\
                                                   &=\inf\SET{\lambda\gamma>0}{x\in\lambda \gamma B_Y}\\
                                                   &=\gamma\inf\SET{\lambda>0}{x\in\lambda\gamma B_Y}\\
                                                   &\leq\gamma\inf\SET{\lambda>0}{x\in\lambda B_X}=\gamma\norm{x}_X.
  \end{align*}
  Hence,
  \begin{equation}
    \label{eq:compare-norms}
    \norm{x}_X\leq \norm{x}_Y\leq \gamma\norm{x}_X.
  \end{equation}
  
  In the rest of this proof, we denote by $\delta_X(A)$ and $\delta_Y(A)$ the
  diameter of a bounded subset $A$ of $\R^n$ with respect to $\norm{\cdot}_X$
  and $\norm{\cdot}_Y$, respectively. By \eqref{eq:compare-norms}, we have
  \begin{equation}
    \label{eq:compare-diameters}
    \delta_X(A)\leq \delta_Y(A)\leq \gamma\delta_X(A),~\forall A\in\mathcal{B}^n.
  \end{equation}

  Let $A$ be a bounded subset of $X$. Then $A$ is also bounded in $Y$. Let $A^c$
  be a completion of $A$ in $Y$. For any $\varepsilon>0$, there exists a
  collection $\SET{B_i}{i\in[m]}$ of subsets of $A^c$ such that $A^c$ is the
  union of this collection and that
  \begin{displaymath}
    \frac{1}{\delta_Y(A^c)}\max\SET{\delta_Y(B_i)}{i\in[m]}\leq \beta_Y(A^c,m)+\varepsilon.
  \end{displaymath}
  Then
  \begin{align*}
    A=A\cap A^c=\bigcup\limits_{i\in[m]}(B_i\cap A),
  \end{align*}
  and, by \eqref{eq:compare-diameters},  
  \begin{align*}
    \frac{1}{\delta_X(A)}\max\SET{\delta_X(B_i\cap A)}{i\in[m]}&\leq \frac{\gamma}{\delta_Y(A)}\max\SET{\delta_Y(B_i)}{i\in[m]}\\
                                                            &=\frac{\gamma}{\delta_Y\qty(A^c)}\max\SET{\delta_Y(B_i)}{i\in[m]}  \\
                                                            &\leq \gamma(\beta_Y(A^c,m)+\varepsilon).
  \end{align*}
  Therefore, $\beta_X(A,m)\leq \gamma\beta_Y(A^c,m)$. It follows that $\beta(X,m)\leq \gamma\beta(Y,m)$.  
\end{proof}

\begin{cor}
  If $X=(\R^n,\norm{\cdot}_X)$ and $Y=(\R^n,\norm{\cdot}_Y)$ are isometric, then $\beta(X,m)=\beta(Y,m)$.
\end{cor}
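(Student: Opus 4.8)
The plan is to reduce this to Theorem~\ref{thm:stability} by checking that isometric spaces are at Banach--Mazur distance $1$. Suppose $X$ and $Y$ are isometric. In finite dimensions a surjective isometry $f\colon X\to Y$ is affine by the Mazur--Ulam theorem, so after composing with a translation we may assume that $f=T$ is a linear bijection with $\norm{Tx}_Y=\norm{x}_X$ for all $x\in\R^n$; equivalently $T(B_X)=B_Y$. Taking $S=T^{-1}$ in the definition of $d_{BM}^M$ we get $S(B_Y)=B_X\subset B_X\subset 1\cdot B_X=1\cdot S(B_Y)$, so $d_{BM}^M(X,Y)\le 1$, and since $d_{BM}^M$ is always at least $1$ we conclude $d_{BM}^M(X,Y)=1$. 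Theorem~\ref{thm:stability} applied with $\gamma=1$ now yields $\beta(X,m)\le\beta(Y,m)$ for every $m\in\PI$, and, the relation ``being isometric'' being symmetric, the same reasoning gives $\beta(Y,m)\le\beta(X,m)$. Hence $\beta(X,m)=\beta(Y,m)$.

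Alternatively, and without invoking Mazur--Ulam, one can argue directly: a linear isometry $T\colon X\to Y$ sends any $A\in\mathcal{B}^n$ to $T(A)\in\mathcal{B}^n$ with $\delta_Y(T(A))=\delta_X(A)$, and transforms a covering $A=\bigcup_{k=1}^m A_k$ into a covering $T(A)=\bigcup_{k=1}^m T(A_k)$ with $\delta_Y(T(A_k))=\delta_X(A_k)$ for each $k$. This gives $\beta_Y(T(A),m)\le\beta_X(A,m)$, and applying the same to $T^{-1}$ gives the reverse inequality, so $\beta_X(A,m)=\beta_Y(T(A),m)$. Since $A\mapsto T(A)$ is a bijection of $\mathcal{B}^n$ onto itself, taking suprema over $A\in\mathcal{B}^n$ gives $\beta(X,m)=\beta(Y,m)$.

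I do not anticipate a real obstacle here; the only point that needs a word of justification is that ``isometric'' is allowed to mean that there is a (possibly non-linear, but surjective) isometry between the spaces, which the Mazur--Ulam theorem immediately converts to a linear isometry.
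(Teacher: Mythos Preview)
Your proposal is correct and follows the paper's intended approach: the corollary is stated without proof in the paper, as it is an immediate consequence of Theorem~\ref{thm:stability} applied with $\gamma=1$ (in both directions), exactly as you argue. Your invocation of Mazur--Ulam to pass from an arbitrary surjective isometry to a linear one is a reasonable extra word of care, and your alternative direct argument is also fine.
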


\begin{prop}
  Let $l_\infty^n=(\R^n,\norm{\cdot}_\infty)$. Then $\beta(l_\infty^n,2^n)=\frac{1}{2}$.
\end{prop}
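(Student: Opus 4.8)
The plan is to establish the two inequalities $\beta(l_\infty^n,2^n)\le\frac12$ and $\beta(l_\infty^n,2^n)\ge\frac12$ separately. The upper bound will come from a dyadic subdivision of a bounding cube, and the lower bound will be witnessed by a single set, the cube $C=[0,1]^n$, whose $\norm{\cdot}_\infty$-diameter is $1$; the obstruction there is a volume estimate.

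For the upper bound, I would first note that every $A\in\mathcal{B}^n$ lies, up to translation, inside a cube of edge length $\DIAM{A}$: since $|x_i-y_i|\le\norm{x-y}_\infty$, the projection of $A$ to the $i$-th coordinate axis is contained in an interval of length at most $\DIAM{A}$, so $A\subseteq\prod_{i=1}^n[a_i,a_i+\DIAM{A}]=:Q$ for suitable reals $a_i$. Dividing $Q$ into its $2^n$ congruent closed subcubes of edge length $\DIAM{A}/2$ and intersecting each of them with $A$ writes $A$ as a union of $2^n$ subsets, each of $\norm{\cdot}_\infty$-diameter at most $\DIAM{A}/2$. Hence $\beta_X(A,2^n)\le\frac12$ for every $A\in\mathcal{B}^n$, so $\beta(l_\infty^n,2^n)\le\frac12$.

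For the lower bound, suppose $C=[0,1]^n=\bigcup_{k=1}^{2^n}A_k$. Each nonempty $A_k$ is contained in its axis-parallel bounding box $Q_k=\prod_{i=1}^n\bigl[\inf\{x_i:x\in A_k\},\ \sup\{x_i:x\in A_k\}\bigr]$, whose edge lengths are all at most $\DIAM{A_k}$ (again because the $l_\infty$-diameter dominates the length of each coordinate projection), so $\VOL(Q_k)\le\DIAM{A_k}^n$. The closed boxes $Q_k$ still cover $C$, hence
\begin{displaymath}
  1=\VOL(C)\le\sum_{k=1}^{2^n}\VOL(Q_k)\le\sum_{k=1}^{2^n}\DIAM{A_k}^n\le 2^n\left(\max_{1\le k\le 2^n}\DIAM{A_k}\right)^n,
\end{displaymath}
so $\max_{k}\DIAM{A_k}\ge\frac12=\frac12\DIAM{C}$. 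Since this holds for every decomposition of $C$ into $2^n$ pieces, $\beta_X(C,2^n)\ge\frac12$, and therefore $\beta(l_\infty^n,2^n)\ge\frac12$. Combining the two bounds yields $\beta(l_\infty^n,2^n)=\frac12$.

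The argument is elementary, and I do not foresee a genuine obstacle. The one point that needs a little care is that the pieces $A_k$ of a decomposition need not be Lebesgue measurable, which is precisely why I pass to the closed bounding boxes $Q_k$: these are measurable, still cover $C$, and bound the volume from above. It is also worth observing that pinning down the \emph{exact} value $\frac12$ (rather than merely $\beta(l_\infty^n,2^n)<1$) relies on the fact that the volume lower bound and the dyadic upper-bound construction match exactly.
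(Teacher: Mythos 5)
Your proof is correct, and both halves rest on the same two core ideas as the paper's proof: the upper bound comes from covering by $2^n$ translates of a half-sized cube, and the lower bound from a volume comparison. The difference is in how these are implemented. The paper first invokes the Eggleston--Soltan theorem that every complete set in $l_\infty^n$ is a homothet of $B_X$, which lets it (i) reduce the supremum over all bounded sets to the single set $B_X$, and (ii) bound the volumes of the completions $B_i^c$ of the pieces, since those completions are again cubes. You bypass that theorem entirely: for the upper bound you observe directly that any bounded $A$ sits inside an axis-parallel cube of edge $\DIAM{A}$ because $\abs{x_i-y_i}\le\norm{x-y}_\infty$, and for the lower bound you replace completions by axis-parallel bounding boxes $Q_k$, which satisfy the same volume estimate $\VOL(Q_k)\le\DIAM{A_k}^n$ for the same reason. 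Your version is therefore self-contained and slightly more elementary, while the paper's reduction to complete sets is the pattern it reuses elsewhere (e.g.\ in Theorem \ref{thm:stability}). Your remark about measurability of the pieces is well taken and is handled correctly by passing to the closed boxes; the paper handles the same issue implicitly because completions are convex bodies. One tiny bookkeeping point: if some $A_k$ is empty its bounding box is undefined, but such pieces can simply be discarded from the covering, so nothing breaks.
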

\begin{proof}
Put $X=l_\infty^n$. Then every complete set in $X$ is a homothetic copy of $B_X$, see \cite{Eggleston1965} and \cite{Soltan1977}. Therefore,
\begin{displaymath}
  \beta_X(A,2^n)=\beta_X(B_X,2^n),~\forall A\in\mathcal{C}_X.
\end{displaymath}
Thus it sufficies to show $\beta_X(B_X,2^n)=\frac{1}{2}$.

On the one hand, $B_X=\frac{1}{2}B_X+\frac{1}{2}V$, where $V$ is the set of vertices of $B_X$. Since the cardinality of $V$ is $2^n$, we have $\beta_X(B_X,2^n)\leq \frac{1}{2}$.

On the other hand, suppose that $B_X$ is the union of $2^n$ of its subsets $B_1,\cdots,B_{2^n}$. For each $i\in[2^n]$, let $B_i^c$ be a completion of $B_i$. Then
\begin{displaymath}
  B_X\subseteq\bigcup\limits_{i\in[2^n]}B_i^c.
\end{displaymath}
It follows that
\begin{displaymath}
  \VOL{B_X}\leq \sum\limits_{i\in[2^n]}\VOL{B_i^c}\leq \sum\limits_{i\in[2^n]}\qty(\frac{1}{2^n})\max\SET{\delta_X(B_i^c)}{i\in[2^n]}\VOL{B_X},
\end{displaymath}
which implies that $\max\SET{\delta_X(B_i^c)}{i\in[2^n]}\geq 1$. Thus
$\beta_X(B_X,2^n)\geq \frac{1}{2}$, which completes the proof.
\end{proof}

\begin{cor}
  Let $X=(\R^n,\norm{\cdot})$. If $d_{BM}^M(X,l_\infty^n)<2$, then $\beta(X,2^n)<1$.
\end{cor}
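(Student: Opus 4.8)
The plan is to deduce this immediately from the stability estimate of Theorem~\ref{thm:stability} together with the exact value $\beta(l_\infty^n,2^n)=\tfrac12$ just computed. Since the infimum defining the multiplicative Banach--Mazur metric is attained, I would set $\gamma=d_{BM}^M(X,l_\infty^n)$, so that by hypothesis $\gamma<2$, and there is a genuine realizing linear transformation; no passage to a limit is needed, which is exactly what lets us keep a \emph{strict} inequality at the end.

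Next I would apply Theorem~\ref{thm:stability} with $Y=l_\infty^n$ and $m=2^n$, obtaining
\begin{displaymath}
  \beta(X,2^n)\leq \gamma\,\beta(l_\infty^n,2^n).
\end{displaymath}
Substituting $\beta(l_\infty^n,2^n)=\tfrac12$ from the preceding Proposition gives $\beta(X,2^n)\leq \gamma/2<1$, which is the claim. There is essentially no obstacle here: the only point requiring a moment's care is that the strictness $\gamma<2$ is preserved through the chain of inequalities, and this is guaranteed precisely because the Banach--Mazur infimum is attained, so $\gamma$ itself (rather than merely a sequence approaching $2$) can be fed into Theorem~\ref{thm:stability}.
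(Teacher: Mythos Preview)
Your proposal is correct and is exactly the intended argument: the corollary follows immediately by combining Theorem~\ref{thm:stability} (with $Y=l_\infty^n$, $m=2^n$) and the preceding Proposition giving $\beta(l_\infty^n,2^n)=\tfrac12$. Your remark on strictness is fine, though note it is not even essential that the Banach--Mazur infimum be attained: since Theorem~\ref{thm:stability} applies to any $\gamma\geq d_{BM}^M(X,l_\infty^n)$, you may simply choose some $\gamma$ with $d_{BM}^M(X,l_\infty^n)\leq\gamma<2$.
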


We end this section with the following result.
\begin{prop}
$\sup\SET{\beta(X,4)}{\text{X is a two-dimensional Banach space}}=\frac{\sqrt{2}}{2}$.
\end{prop}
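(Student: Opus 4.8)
I would prove the two bounds $\sup\ge\frac{\sqrt2}{2}$ and $\sup\le\frac{\sqrt2}{2}$ separately; throughout, $\E^2=(\R^2,\norm{\cdot}_E)$ denotes the Euclidean plane. For the lower bound, since $\E^2$ is a two-dimensional Banach space it suffices to show $\beta(\E^2,4)\ge\frac{\sqrt2}{2}$, and for that to produce a single hard set. I would take $D\subset\E^2$ a disk of diameter $1$ and argue on its bounding circle: if $D=\bigcup_{i=1}^4A_i$ with $\DIAM{A_i}<\frac{\sqrt2}{2}$ for every $i$, then each $A_i\cap\BD{D}$ has diameter $<\frac{\sqrt2}{2}<1$, hence contains no antipodal pair of $\BD{D}$ and therefore lies in a circular arc of angular measure $2\arcsin\DIAM{A_i}<2\arcsin\frac{\sqrt2}{2}=\frac{\pi}{2}$ (a one-line chord computation on the circle of diameter $1$). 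Four such arcs cannot cover $\BD{D}$, a contradiction; hence $\beta_{\E^2}(D,4)\ge\frac{\sqrt2}{2}$.

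For the upper bound, fix a two-dimensional Banach space $X$. (Note that the stability theorem alone is too weak here: combined with John's theorem and $\beta(\E^2,4)=\frac{\sqrt2}{2}$ it only yields $\beta(X,4)\le1$.) By the first Proposition of this section it is enough to bound $\beta_X(W,4)$ for a complete set $W$ with $\DIAM{W}=1$; since in a Minkowski plane completeness is equivalent to constant width, $W-W=B_X$. Let $c$ be a circumcenter of $W$ with circumradius $R=\max\SET{\norm{x-c}_X}{x\in W}$. A classical Jung-type estimate of Bohnenblust for Minkowski planes gives $R\le\frac23<\frac{\sqrt2}{2}$, and $c\in\INT{W}$ since $W$ is a convex body; translate so that $c=\origin$. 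The plan is then to split $\BD{W}$ into four arcs $\gamma_1,\dots,\gamma_4$, in cyclic order, with $\DIAM{\gamma_i}\le\frac{\sqrt2}{2}$ for each $i$. Granting this, put $T_i=\CH\qty(\qty{\origin}\cup\gamma_i)$; the $T_i$ cover $W$ and
\[
  \DIAM{T_i}=\max\qty{\DIAM{\gamma_i},\ \max\SET{\norm{p}_X}{p\in\gamma_i}}\le\max\qty{\tfrac{\sqrt2}{2},R}=\tfrac{\sqrt2}{2},
\]
so $\beta_X(W,4)\le\frac{\sqrt2}{2}$ and hence $\beta(X,4)\le\frac{\sqrt2}{2}$. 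For the Euclidean disk both $R=\frac12$ and the arc estimate (the $\gamma_i$ being quarter-circles) are attained, which explains why $\frac{\sqrt2}{2}$ is exactly the supremum.

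The main obstacle is the boundary-splitting lemma. I would base it on the observation that a convex arc whose tangent direction turns by total angle at most $\frac{\pi}{2}$ realizes its diameter at its two endpoints — one sees this by pushing an endpoint backwards along the arc, which cannot decrease its distance to the other endpoint since the backward tangent stays within a right angle of the chord direction. Because a complete set has no boundary corner of exterior angle exceeding $\frac{\pi}{2}$ (the extreme case being the square $W$ in $\ell_\infty^2$), the turning of $\BD{W}$, which totals $2\pi$, can be cut into four contiguous pieces of turning exactly $\frac{\pi}{2}$; for each such arc the diameter equals the distance between its endpoints, so the lemma reduces to placing the four cut-points so that consecutive ones are at distance at most $\frac{\sqrt2}{2}$. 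For this I expect to run a continuity/balancing argument — slide the cut-points until all four arc-diameters coincide, then bound their common value — combined with the constant-width relation $W-W=B_X$, which ties diametrically conjugate points of $\BD{W}$ to a pair of conjugate (Birkhoff-orthogonal) directions. Showing that the common value never exceeds $\frac{\sqrt2}{2}$, with the disk the only extremal body, is the crux; the cases $X=\E^2$ (quarter-circle arcs) and $X=\ell_\infty^2$ (cut at the midpoints of the four sides of the square $W$, giving $L$-shaped arcs of diameter $\frac12$) already exhibit the mechanism.
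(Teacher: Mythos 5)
Your lower bound is sound: restricting a hypothetical partition of the Euclidean disk to its boundary circle, noting that a subset of the circle of (chordal) diameter less than $\frac{\sqrt2}{2}$ times the disk's diameter lies in an open arc of angular measure less than $\frac{\pi}{2}$, and counting total arc length is a correct and clean way to get $\beta_{l_2^2}(B,4)\ge\frac{\sqrt2}{2}$; this is in the same spirit as, and if anything tidier than, the paper's five-point argument on the inscribed square. The problem is the upper bound. The paper disposes of it in one line by quoting Lassak's theorem that every planar convex body $K$ can be covered by four translates of $\frac{\sqrt2}{2}K$; since a translate of $\frac{\sqrt2}{2}K$ has diameter $\frac{\sqrt2}{2}\DIAM{K}$ in \emph{every} norm on $\R^2$, this gives $\beta_X(K,4)\le\frac{\sqrt2}{2}$ for all two-dimensional $X$ simultaneously, with no case analysis.

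Your replacement --- coning $\BD{W}$ over the circumcenter after splitting it into four arcs of diameter at most $\frac{\sqrt2}{2}$ --- leaves the splitting lemma, which you yourself identify as the crux, unproven, and the sketch you offer for it does not survive scrutiny. First, the claim that a complete set has no boundary corner of exterior angle exceeding $\frac{\pi}{2}$ is false in a general Minkowski plane: if $B_X$ is a thin parallelogram, the complete sets of $X$ are exactly the homothets of $B_X$, which have two corners of exterior angle arbitrarily close to $\pi$. Second, your turning-angle lemma (an arc of total turning at most $\frac{\pi}{2}$ attains its diameter at its endpoints) is an inner-product statement about Euclidean angle, while the diameters you must control are taken in $\norm{\cdot}_X$; no normalization linking the two is specified, and one cannot simply pass to the Euclidean norm since that changes $\beta_X$. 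Third, even granting a balanced cut with four equal arc-diameters, the assertion that their common value never exceeds $\frac{\sqrt2}{2}$ is precisely the quantitative content of the theorem and is asserted rather than proved. So the upper half of your argument is a genuine gap; the direct repair is to invoke (or reprove) Lassak's covering result, after which the reduction to complete sets and the circumradius bound become unnecessary.
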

\begin{proof}
  Put $\eta=\sup\SET{\beta(X,4)}{X\text{ is a two-dimensional Banach space}}$.
  Let $K\subset\R^2$ be a planar convex body. By the main result in
  \cite{Lassak1986}, $K$ can be covered by four translates of
  $\frac{\sqrt{2}}{2}K$. It follows that $\beta_X(K,4)\leq \frac{\sqrt2}{2}$
  holds for each two-dimensional Banach space $X$. Thus, $\eta\leq
  \frac{\sqrt2}{2}$.

  Let $X=l_2^2$ and $B_X$ be the unit disk of $l_2^2$. To show that $\eta\geq
  \frac{\sqrt2}{2}$, we only need to prove $\beta_X(B_X,4)\geq
  \frac{\sqrt{2}}{2}$. Suppose the contrary that $B_X$ is the union of $A_1,
  A_2, A_3, A_4$, where
  \begin{displaymath}
    \max\SET{\DIAM{A_i}}{i\in[4]}<\frac{\sqrt{2}}{2}.
  \end{displaymath}
  Let $v_1$, $v_2$, $v_3$, and $v_4$ be the vertices of a square
  inscribed in the unit circle $S_X$ of $X$. Then for any $\qty{i,j}\subset[4]$
  \begin{displaymath}
    \norm{v_i-v_j}\geq\sqrt{2}.
  \end{displaymath}
  Assume without loss of generality  that $v_1\in A_1$ and $v_2\in A_2,$ then
  \begin{displaymath}
    \frac{v_1+v_2}{\norm{v_1+v_2}}\notin A_1\cup A_2.
  \end{displaymath}
  Assume that $\frac{v_1+v_2}{\norm{v_1+v_2}}\in A_3$. Then $v_3, v_4\notin
  A_1\cup A_2\cup A_3$, and $A_4$ cannot contain both $v_3$ and $v_4$, a
  contradiction. Thus, $\beta_X(B_X,4)\geq \frac{\sqrt{2}}{2}$ as claimed.
\end{proof}

\section{Estimating $\beta(X,m)$ via $\beta_X(B_X,m)$}
\label{sec:beta-space-via-ball}
Let $S$ be a simplex in $X=(\R^n,\norm{\cdot})$. If the distance between each
pair of vertices of $S$ all equals to $\DIAM{S}$, then we say that $S$ is \emph{equilateral}.

\begin{prop}
  Let $T$ be a triangle in $\R^2$ and $X=(\R^2,\norm{\cdot})$. Then
  $\beta_X(T,4)\leq \frac{1}{2}$. If $T$ is equilateral in $X$, then $\beta_X(T,4)=\frac{1}{2}$.
\end{prop}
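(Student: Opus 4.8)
The plan is to establish $\beta_X(T,4)\le\tfrac12$ by the medial (midpoint) subdivision of $T$, which works in any norm, and then — assuming $T$ is equilateral — to obtain the matching lower bound $\beta_X(T,4)\ge\tfrac12$ by a pigeonhole argument applied to the three vertices together with the three edge midpoints.

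For the upper bound, let $v_1,v_2,v_3$ be the vertices of $T$ and, for $\{i,j,k\}=\{1,2,3\}$, let $m_i=\tfrac12(v_j+v_k)$ be the midpoint of the side opposite $v_i$. First I would record the elementary fact that the three corner triangles $\conv\{v_i,m_j,m_k\}$ and the central triangle $\conv\{m_1,m_2,m_3\}$ cover $T$; this is the standard medial subdivision, and can be verified directly in barycentric coordinates (a point $p=\lambda_1 v_1+\lambda_2 v_2+\lambda_3 v_3$ of $T$ lies in the corner triangle at $v_i$ precisely when $\lambda_i\ge\tfrac12$, and in the central triangle precisely when $\lambda_1,\lambda_2,\lambda_3\le\tfrac12$). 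Next observe that the corner triangle at $v_i$ equals $\tfrac12 T+\tfrac12 v_i$ and the central triangle equals $c-\tfrac12 T$ with $c=\tfrac12(v_1+v_2+v_3)$, so each of the four pieces has the form $\pm\tfrac12 T+w$. Since $\norm{(\pm\tfrac12 x+w)-(\pm\tfrac12 y+w)}=\tfrac12\norm{x-y}$, every such piece has diameter exactly $\tfrac12\DIAM{T}$; hence $T$ is a union of four subsets of diameter $\tfrac12\DIAM{T}$, and $\beta_X(T,4)\le\tfrac12$.

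For the lower bound, assume $T$ is equilateral and put $d=\DIAM{T}$, so that $\norm{v_i-v_j}=d$ for all $i\ne j$ (the diameter of the triangle is attained at a pair of its vertices since $T$ is the convex hull of $v_1,v_2,v_3$ and $x\mapsto\norm{x-y}$ is convex). I claim that any two of the six points $v_1,v_2,v_3,m_1,m_2,m_3$ of $T$ are at distance at least $\tfrac d2$. Indeed $\norm{v_i-v_j}=d$; a direct computation gives $m_i-m_j=\tfrac12(v_j-v_i)$ and $v_i-m_j=\tfrac12(v_i-v_k)$ for $i\ne j$, so $\norm{m_i-m_j}=\norm{v_i-m_j}=\tfrac d2$; and finally, since $\norm{v_j-m_i}=\tfrac12\norm{v_j-v_k}=\tfrac d2$, the triangle inequality gives $\norm{v_i-m_i}\ge\norm{v_i-v_j}-\norm{v_j-m_i}=\tfrac d2$. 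Now if $\beta_X(T,4)<\tfrac12$ there is a decomposition $T=\bigcup_{k=1}^4 A_k$ with $\DIAM{A_k}<\tfrac d2$ for every $k$; since the six points lie in $T$, the pigeonhole principle forces some $A_k$ to contain two of them, whence $\DIAM{A_k}\ge\tfrac d2$, a contradiction. Thus $\beta_X(T,4)\ge\tfrac12$, and together with the upper bound $\beta_X(T,4)=\tfrac12$.

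The argument has no serious obstacle; the one point requiring care is that the strict inequality $\DIAM{A_k}<\tfrac d2$ produced by $\beta_X(T,4)<\tfrac12$ must be contradicted even though several of the relevant distances among the six points equal $\tfrac d2$ exactly. This is precisely why one uses the full vertices-plus-midpoints configuration: six points, any two of which are already at distance $\ge\tfrac d2$, so that a genuine pigeonhole collision (six points into four sets) finishes the proof without needing any strict separation between the points.
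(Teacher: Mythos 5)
Your proof is correct and follows essentially the same route as the paper: the medial subdivision into four translates of $\pm\tfrac12 T$ for the upper bound, and the six-point configuration of vertices plus edge midpoints (pairwise at distance at least $\tfrac12\DIAM{T}$) for the lower bound. The only cosmetic difference is that you run a single pigeonhole on all six points, whereas the paper first places the three vertices in three distinct pieces and then forces all three midpoints into the fourth; you also spell out the distance estimates and the covering verification that the paper leaves implicit.
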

\begin{proof}
  We only need to consider the case when $T$ is equilateral. Assume without loss
  of generality that $\DIAM{T}=1$. It is clear that
  $\beta_X(T,4)\leq \frac{1}{2}$. Denote by $\qty{a,b,c}$ the set of vertices of
  $T$, and by $\qty{p,q,r}$ the set of midpoints of three sides of $T$. Then
  $\norm{p-q}=\norm{p-r}=\norm{q-r}=\frac{1}{2}$.

  Suppose the contrary that $\beta_X(T,4)<\frac{1}{2}$. Then there exist four
  subsets $T_1,T_2,T_3,T_4$ of $T$ such that $T=\bigcup_{i\in[4]}T_i$ and that
  $\max\SET{\DIAM{T_i}}{i\in[4]}<\frac{1}{2}$. We may assume that $a\in T_1$,
  $b\in T_2$, and $c\in T_3$. Since
  $\qty{p,q,r}\cap\bigcup_{i\in[3]}T_i=\emptyset$, we have $\qty{p,q,r}\subseteq
  T_4$, which is impossible. Thus $\beta_X(T,4)=\frac{1}{2}$ as claimed.
\end{proof}

\begin{prop}
  \label{prop:simplex}
  Let $T$ be a simplex in $X=(\R^3,\norm{\cdot})$. Then
  \begin{displaymath}
    \beta_X(T,8)\leq \frac{9}{16}.
  \end{displaymath}
\end{prop}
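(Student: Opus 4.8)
The plan is to subdivide the simplex $T$ in a way that mimics the planar barycentric subdivision, but adapted to three dimensions, and then control the diameters of the pieces. Assume without loss of generality that $\DIAM{T}=1$, and let $v_0,v_1,v_2,v_3$ be the vertices of $T$. The natural first attempt is to scale $T$ by $\frac{1}{2}$: the four corner pieces $\frac{1}{2}T + \frac{1}{2}v_i$ each have diameter $\frac{1}{2}$ and together they cover everything except a central octahedral region $O$ (the convex hull of the six edge-midpoints $m_{ij} = \frac{1}{2}(v_i+v_j)$). So the crux is to cover $O$ by four further sets of diameter at most $\frac{9}{16}$; since the diameter of $O$ itself can be as large as $\frac{1}{2}$ but its ``long diagonals'' $m_{ij}$ to $m_{k\ell}$ (disjoint index pairs) can have norm up to $1$ — wait, $\norm{m_{01}-m_{23}} = \frac{1}{2}\norm{(v_0+v_1)-(v_2+v_3)}$, which is bounded by $\frac{1}{2}\cdot 2 = 1$ only crudely; a better bound using $\norm{v_i - v_j}\le 1$ gives at most $\frac{1}{2}(\norm{v_0-v_2}+\norm{v_1-v_3})\le 1$ still, so the octahedron genuinely needs to be split.

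**The key steps**, in order, would be: (1) Set up the corner decomposition $T = \bigcup_{i=0}^{3}(\frac{1}{2}T + \frac{1}{2}v_i) \cup O$ and verify each corner piece has diameter $\frac{1}{2}\DIAM{T} = \frac{1}{2} \le \frac{9}{16}$, and that the four corner pieces together with $O$ cover $T$ (this is an affine/combinatorial fact, true in $\R^3$ and hence in any norm). (2) Choose a distinguished vertex, say $v_0$, and consider the four midpoints adjacent to it, $m_{01}, m_{02}, m_{03}$, together with the ``opposite face'' midpoints $m_{12}, m_{13}, m_{23}$. The octahedron $O$ has these six vertices. Split $O$ by a hyperplane-type construction into four tetrahedral or prismatic chunks, each scaled-down copy of something with diameter bounded by combinations of edge lengths. (3) The cleanest route: observe that $O$ is covered by $\frac{1}{2}(T' )$-type pieces where $T'$ ranges over the four ``medial'' sub-simplices — but a more promising count is that $O = \conv\{m_{ij}\}$ can be written as the union of four translates of $\frac{1}{2}O$, giving diameter $\frac{1}{2}\DIAM{O} \le \frac{1}{4}$, which is far better than $\frac{9}{16}$; the catch is whether $\frac{1}{2}O + \text{(translations)}$ actually covers $O$ — for a cross-polytope this requires $2^3 = 8$ translates, not $4$, so this fails. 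Instead I would cover $O$ using four pieces each of the form $\frac{1}{2}(\text{edge-midpoint} + T)\cap O$ or use a single well-chosen central point $c$ (e.g. the centroid of $T$) and the pieces $\conv(\{c\}\cup F)$ for $F$ among four of the eight facets of $O$, grouped so that each group of two adjacent facets has its union of diameter $\le \frac{9}{16}$.

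**The main obstacle** is step (3): getting the constant exactly $\frac{9}{16}$ rather than something slightly worse. The value $\frac{9}{16} = \frac{1}{2} + \frac{1}{16}$ suggests the octahedral pieces are allowed to be a bit larger than the corner pieces, and the proof must locate a decomposition of $O$ into four parts each of diameter at most $\frac{9}{16}$. The natural candidate is: pick the vertex $v_0$, set $c = \frac{1}{2}m_{0j}$-type point or the barycenter $b_0$ of the face opposite $v_0$ pulled toward $v_0$, and take the four pieces to be $\conv\{m_{0i}, m_{0j}, m_{ij}, m_{ik}\}$-style quadrilaterals — then bound each such piece's diameter by writing every relevant point as $\frac{1}{2}(v_a + v_b)$ and using $\norm{v_a-v_b}\le 1$ together with one inequality of the form $\norm{\frac{1}{2}(v_a+v_b) - \frac{1}{2}(v_c + v_d)} \le \frac{1}{2}\norm{v_a-v_c} + \frac{1}{2}\norm{v_b-v_d}$ applied cleverly. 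I would expect that some pair of these midpoint-differences forces the bound up to exactly $\frac{9}{16}$ via a convexity argument — e.g. a point on a segment $[m_{0i}, m_{ij}]$ at parameter $t$ has distance to a point of another piece bounded by a function maximized at $t$ giving $\frac{9}{16}$. Verifying this extremal computation, and checking that the four pieces genuinely exhaust $O$, is the real content; the rest is bookkeeping with the triangle inequality and the normalization $\DIAM{T}=1$.
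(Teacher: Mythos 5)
There is a genuine gap, and it sits exactly where you flag it: step (3). Your corner pieces $\frac{1}{2}T+\frac{1}{2}v_i$ leave the medial octahedron $O=\conv\{m_{ij}\}$, and in a general norm $\DIAM{O}$ is \emph{not} small---it can equal $\DIAM{T}$. Concretely, in $l_\infty^3$ take $v_0=(0,0,0)$, $v_1=(1,1,0)$, $v_2=(1,0,1)$, $v_3=(0,1,1)$; this is an equilateral simplex of diameter $1$, and $\norm{m_{01}-m_{23}}_\infty=\frac{1}{2}\norm{(0,0,-2)}_\infty=1$. So you are left with covering a full-diameter centrally symmetric body by four sets of diameter at most $\frac{9}{16}$, a problem of essentially the same difficulty as the one you started with; none of the candidate constructions you list for $O$ is carried out, and you yourself concede that the extremal verification is ``the real content.'' As written the argument does not close.

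The paper's proof avoids the octahedron entirely by making the corner pieces as large as the target ratio allows: it sets $T_i=\frac{7}{16}v_i+\frac{9}{16}T$ (each of diameter $\frac{9}{16}\DIAM{T}$), so that the uncovered set is $\SET{\sum_i\lambda_iv_i}{\lambda_i\le\frac{7}{16},\ \sum_i\lambda_i=1}$, which a short barycentric computation places inside $-\frac{3}{4}T$, a \emph{reversed} homothet of $T$ with ratio $\frac{3}{4}$ centred at the centroid, rather than inside an octahedron of uncontrolled diameter. Since $T=\bigcup_{i}\qty(\frac{1}{4}v_i+\frac{3}{4}T)$, this residue is covered by four translates of $-\frac{9}{16}T$, giving $4+4=8$ pieces of diameter $\frac{9}{16}\DIAM{T}$ with no extremal computation at all. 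If you want to salvage your approach, the fix is precisely this: enlarge the corner homothets from ratio $\frac{1}{2}$ to the target ratio $\frac{9}{16}$ so that the central leftover shrinks to a simplex-shaped set instead of the medial octahedron.
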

\begin{proof}
  Denote by $\SET{v_i}{i\in[4]}$ the set of vertices of $T$. Without loss of
  generality we may assume that $\origin=\frac{1}{4}\sum_{i\in[4]}v_i$. For each
  $i\in[4]$, put $T_i=\frac{7}{16}v_i+\frac{9}{16}T$. Then the portion of $T$
  not covered by $\bigcup_{i\in[4]}T_i$ is
  \begin{displaymath}
    T_5=\SET{\sum_{i\in[4]}\lambda_iv_i}{\lambda_i\in\qty[0,\frac{7}{16}],~\forall
      i\in [4],~\sum_{i\in[4]}\lambda_i=1}.
  \end{displaymath}
  Suppose that $\sum_{i\in[4]}\lambda_iv_i\in T_5$. Then
  \begin{align*}
    \frac{4}{7}\sum_{i\in[4]}\lambda_iv_i-\origin&=\frac{4}{7}\sum_{i\in[4]}\lambda_iv_i-\frac{1}{4}\sum_{i\in[4]}v_i\\
                                                 &=-\sum_{i\in[4]}\qty(\frac{1}{4}-\frac{4}{7}\lambda_i)v_i\\
                                                 &=-\qty(\sum_{j\in[4]}\qty(\frac{1}{4}-\frac{4}{7}\lambda_j))\sum_{i\in[4]}\frac{\frac{1}{4}-\frac{4}{7}\lambda_i}{\sum_{j\in[4]}(\frac{1}{4}-\frac{4}{7}\lambda_j)}v_i\\
    &\in-\frac{3}{7}T.
  \end{align*}
  It follows that $\sum_{i\in[4]}\lambda_iv_i\in -\frac{3}{4}T$. Thus
  $T_5\subset -\frac{3}{4}T$. It is not difficult to verify that $T$ can be
  covered by $4$ translates of $\frac{3}{4}T$, which implies that $T_5$ can be
  covered by $4$ translates of $-\frac{9}{16}T$. Therefore, $\beta_X(T,8)\leq \frac{9}{16}$.
\end{proof}

\begin{prop}\label{prop:simplex-9}
  Let $T$ be a simplex in $X=(\R^3,\norm{\cdot})$. Then
  \begin{displaymath}
    \beta_X(T,9)\leq \frac{9}{17}.
  \end{displaymath}
\end{prop}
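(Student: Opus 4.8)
The plan is to follow the proof of Proposition~\ref{prop:simplex} with one extra covering piece, replacing the ratio $\frac{9}{16}$ throughout by $\frac{9}{17}$. After translating so that $\origin=\frac14\sum_{i\in[4]}v_i$, hence $\sum_{i\in[4]}v_i=\origin$, where $\SET{v_i}{i\in[4]}$ is the set of vertices of $T$, I would put, for $i\in[4]$, $T_i=\frac{8}{17}v_i+\frac{9}{17}T$ --- the image of $T$ under the homothety with centre $v_i$ and ratio $\frac{9}{17}$ --- so that each $T_i$ has diameter $\frac{9}{17}\DIAM{T}$. Writing a point of $T$ as $\sum_{i\in[4]}\lambda_iv_i$ with $\lambda_i\geq0$ and $\sum_i\lambda_i=1$, solving $\frac{17}{9}\qty(\sum_i\lambda_iv_i-\frac{8}{17}v_j)\in T$ shows that $\sum_i\lambda_iv_i\in T_j$ exactly when $\lambda_j\geq\frac{8}{17}$. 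Hence the part of $T$ not covered by $\bigcup_{i\in[4]}T_i$ lies in
\begin{displaymath}
  T_5=\SET{\sum_{i\in[4]}\lambda_iv_i}{\lambda_i\in\qty[0,\frac{8}{17}],~\forall i\in[4],~\sum_{i\in[4]}\lambda_i=1}.
\end{displaymath}

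Next I would locate $T_5$ inside a central homothet of $T$. Since $\sum_{i\in[4]}\frac{8}{17}=\frac{32}{17}=1+\frac{15}{17}$ and $\sum_iv_i=\origin$, the substitution $\lambda_i=\frac{8}{17}-\frac{15}{17}\nu_i$ shows that $\SET{\sum_{i\in[4]}\lambda_iv_i}{\sum_i\lambda_i=1,~\lambda_i\leq\frac{8}{17}~\forall i\in[4]}=-\frac{15}{17}T$; in particular $T_5\subseteq-\frac{15}{17}T$. Because $\frac{15}{17}\cdot\frac{3}{5}=\frac{9}{17}$, any cover of $T$ by $N$ translates of $\pm\frac{3}{5}T$ becomes, under the homothety $x\mapsto-\frac{15}{17}x$, a cover of $-\frac{15}{17}T$ by $N$ translates of $\pm\frac{9}{17}T$, each of diameter $\frac{9}{17}\DIAM{T}$. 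So it is enough to cover $T$ by $5$ translates of $\pm\frac{3}{5}T$.

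For that I would take the four vertex-homotheties of $T$ of ratio $\frac35$ together with the central homothet $-\frac35 T$: the homothety with centre $v_i$ and ratio $\frac35$ covers $\SET{\sum_j\lambda_jv_j}{\lambda_i\geq\frac25}$, so the four of them cover all points with $\max_j\lambda_j\geq\frac25$, while the same coordinate identity (now with $\sum_j\frac25=1+\frac35$) gives $-\frac35 T=\SET{\sum_j\lambda_jv_j}{\sum_j\lambda_j=1,~\lambda_j\leq\frac25~\forall j}$, which contains the remaining points, where all $\lambda_j<\frac25$. Thus these $5$ homothets of ratio $\frac35$ cover $T$; transporting this cover by $x\mapsto-\frac{15}{17}x$ yields a cover of $-\frac{15}{17}T$ by $4$ translates of $-\frac{9}{17}T$ and one translate of $\frac{9}{17}T$. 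Intersecting the resulting $4+5=9$ sets with $T$ exhibits $T$ as a union of $9$ of its subsets, each of diameter at most $\frac{9}{17}\DIAM{T}$, so $\beta_X(T,9)\leq\frac{9}{17}$.

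I expect no genuine obstacle; the only care needed is the barycentric bookkeeping --- checking that the complement of the four corner pieces lies inside \emph{precisely} the central homothet $-\frac{15}{17}T$, and that the ratios compose as $\frac{15}{17}\cdot\frac{3}{5}=\frac{9}{17}$ when the $\frac{3}{5}$-cover of $T$ is rescaled. Everything else repeats the elementary pigeonhole argument already used in Proposition~\ref{prop:simplex}.
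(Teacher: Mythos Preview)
Your proposal is correct and follows essentially the same route as the paper's proof: take the four vertex-homotheties $T_i=\frac{8}{17}v_i+\frac{9}{17}T$, observe that the uncovered region $T_5$ sits inside $-\frac{15}{17}T$, and then cover this central piece by five sets of diameter $\frac{9}{17}\DIAM{T}$ via the fact that $\beta_X(T,5)\leq\frac{3}{5}$. The only difference is presentational: the paper simply invokes $\beta_X(T,5)\leq\frac{3}{5}$ (referring back to the method of Proposition~\ref{prop:simplex}), whereas you spell out the five-piece cover of $T$ explicitly as four corner homothets of ratio $\frac{3}{5}$ plus the central $-\frac{3}{5}T$, and then transport it by $x\mapsto-\frac{15}{17}x$.
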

\begin{proof}
  We use the idea in the proof of Proposition \ref{prop:simplex}. For each
  $i\in[4]$, put $T_i=\frac{8}{17}v_i+\frac{9}{17}T$. Then the portion of $T$
  not covered by $\bigcup_{i\in[4]}T_i$ is
  \begin{displaymath}
    T_5=\SET{\sum_{i\in[4]}\lambda_iv_i}{\lambda_i\in\qty[0,\frac{8}{17}],~\forall
    i\in[4],~\sum_{i\in[4]}\lambda_i=1}.
\end{displaymath}
As in the proof of Proposition \ref{prop:simplex}, $T_5\subset -\frac{15}{17}T$.
By using the idea in the proof of Proposition \ref{prop:simplex} again, one can
show that $\beta_X(T,5)\leq \frac{3}{5}$. Therefore, $T_5$ is the union of $5$
subsets of $T_5$ whose diameters are not larger than $\frac{9}{17}\DIAM{T}$. It
follows that $\beta_X(T,9)\leq \frac{9}{17}$.
\end{proof}

\begin{rem}
  The estimations in Proposition \ref{prop:simplex} and Proposition
  \ref{prop:simplex-9} are independent of the choice of norm on $\R^3$.
\end{rem}

For a convex body $K$, \emph{the Minkowski measure of symmetry}, denoted by
$s(K)$, is defined as 
\begin{displaymath}
  s(K)=\min\SET{\lambda >0}{\exists x\in X\text{ s.t. } K+x\subset -\lambda K}.
\end{displaymath}
It is known that
\begin{displaymath}
  1\leq s(K) \leq n,~\forall  K\in\SCB;
\end{displaymath}
the equality on the left holds if and only if $K$ is centrally symmetric, and
the equality on the right holds if and only if $K$ is a simplex (cf. \cite{Toth2015}).

The following lemma shows the stability of $\beta_X(K,m)$ with respect to $K$.
\begin{lem}\label{lem:estimation-via-Hausdorff}
  Let $X=(\R^n,\norm{\cdot})$, and $K$ and $L$ be two convex bodies in $X$. If
  there exist a number $\gamma\geq 1$ and a point $c\in \R^n$ such that
  \begin{displaymath}
    K\subseteq L\subseteq \gamma K+c,
  \end{displaymath}
  then, for each $m\in\PI$, we have
  \begin{displaymath}
    \beta_X(L,m)\leq \gamma\beta_X(K,m).
  \end{displaymath}
\end{lem}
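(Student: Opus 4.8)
The plan is to transfer a near-optimal covering of $K$ over to $L$ and control the diameter distortion using the inclusions $K\subseteq L\subseteq \gamma K+c$. Fix $m\in\PI$ and $\varepsilon>0$. By definition of $\beta_X(K,m)$, choose subsets $K_1,\dots,K_m$ of $K$ with $K=\bigcup_{i\in[m]}K_i$ and
\begin{displaymath}
  \frac{1}{\DIAM{K}}\max\SET{\DIAM{K_i}}{i\in[m]}\leq \beta_X(K,m)+\varepsilon.
\end{displaymath}
The first step is to produce from this a covering of $L$. Since $L\subseteq \gamma K+c$, write $L\subseteq \bigcup_{i\in[m]}(\gamma K_i+c)$, and then set $L_i=L\cap(\gamma K_i+c)$; these cover $L$. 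Each $L_i$ has diameter at most $\DIAM{\gamma K_i+c}=\gamma\DIAM{K_i}$, so
\begin{displaymath}
  \max\SET{\DIAM{L_i}}{i\in[m]}\leq \gamma\max\SET{\DIAM{K_i}}{i\in[m]}\leq \gamma\DIAM{K}\qty(\beta_X(K,m)+\varepsilon).
\end{displaymath}
The second step is to normalize: since $K\subseteq L$ we have $\DIAM{K}\leq \DIAM{L}$, hence $\tfrac{1}{\DIAM{L}}\max_i\DIAM{L_i}\leq \gamma(\beta_X(K,m)+\varepsilon)$, giving $\beta_X(L,m)\leq \gamma(\beta_X(K,m)+\varepsilon)$. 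Letting $\varepsilon\to 0^+$ yields the claim.

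I do not expect a genuine obstacle here: the argument is the same transfer-of-covering idea already used in the proofs of the earlier Proposition and of Theorem \ref{thm:stability}, only now the scaling happens on the body side rather than the norm side. The one point worth a sentence of care is that the scaling factor $\gamma$ multiplies the \emph{diameters} of the pieces but does \emph{not} help (in fact hurts) in the denominator, which is exactly why we need the left inclusion $K\subseteq L$ to bound $\DIAM{L}$ from below by $\DIAM{K}$ rather than from above — so that the normalization does not reintroduce another factor of $\gamma$. One should also note in passing that if $\DIAM{K_i}=0$ for some $i$ (a singleton piece) the bound is trivially fine, and that the infimum defining $\beta_X(K,m)$ is over genuine coverings so such a collection always exists.
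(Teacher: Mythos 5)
Your proof is correct and follows essentially the same route as the paper's: the paper takes a near-optimal $m$-covering of $\gamma K+c$ (equivalently, your scaled-and-translated covering of $K$), intersects it with $L$, and uses $\DIAM{K}\leq\DIAM{L}$ from the inclusion $K\subseteq L$ to normalize. No issues.
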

\begin{proof}
  For each $\varepsilon>0$, there exists a collection $\SET{K_i}{i\in[m]}$ of
  subsets of $\gamma K+c$ such that
  \begin{displaymath}
    \gamma K+c=\bigcup_{i\in[m]}K_i
  \end{displaymath}
  and
  \begin{displaymath}
    \DIAM{K_i}\leq\gamma\DIAM{K}\beta_X(K,m)+\varepsilon\leq \gamma\DIAM{L}\beta_X(K,m)+\varepsilon,~\forall i\in[m].
  \end{displaymath}
  Since
  \begin{displaymath}
    L=L\cap (\gamma K+c)=\bigcup_{i\in[m]}\qty(L\cap K_i),
  \end{displaymath}
  we have
  \begin{displaymath}
    \beta_X(L,m)\leq\gamma\beta_X(K,m)+\frac{\varepsilon}{\DIAM{L}}.
  \end{displaymath}
  Since $\varepsilon$ is arbitrary, $\beta_X(L,m)\leq \gamma\beta_X(K,m)$ as claimed.
\end{proof}

\begin{thm}
  Let $X=(\R^3,\norm{\cdot})$, $m\in\PI$, and
  \begin{displaymath}
    \eta=\sup\SET{\beta_X(T,m)}{T\text{ is a simplex in $\R^3$}}.
  \end{displaymath}
  We have
  \begin{displaymath}
    \beta(X,m)\leq\min\limits_{\varepsilon\in(0,1/3)}\max\qty{\qty(1+\frac{4\varepsilon}{1-3\varepsilon})\eta,\frac{2(3-\varepsilon)}{4-\varepsilon}\beta_X(B_X,m)}.
  \end{displaymath}
\end{thm}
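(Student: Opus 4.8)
The plan is to reduce an arbitrary complete set $A$ in $X$ to one of two extreme shapes — a simplex or a (rescaled copy of the) unit ball — according to how asymmetric $A$ is, and then apply the corresponding known bound. The parameter $\varepsilon$ tunes the threshold between the two regimes. First I would normalize $\DIAM{A}=1$ and invoke the Minkowski measure of symmetry: by definition there is a point $c$ and a value $s=s(A)\in[1,3]$ with $A+c\subseteq -sA$, so after a translation we may place $A$ inside a common dilate and obtain, via the standard argument for $s(K)$, an inclusion of the form $p+\mu A\subseteq -A\subseteq$ (something), giving two-sided control of $A$ by $-A$ with ratio governed by $s(A)$.

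Next I would split on the value of $s(A)$ relative to a cutoff determined by $\varepsilon$. When $s(A)$ is close to $3$ (the simplex end), $A$ is ``almost a simplex'': I would use the John-type / symmetry-measure machinery to produce a simplex $T$ with $T\subseteq A\subseteq \lambda T+v$ for some $\lambda\le 1+\frac{4\varepsilon}{1-3\varepsilon}$, then apply Lemma~\ref{lem:estimation-via-Hausdorff} to get $\beta_X(A,m)\le\lambda\,\beta_X(T,m)\le\bigl(1+\frac{4\varepsilon}{1-3\varepsilon}\bigr)\eta$. When $s(A)$ is bounded away from $3$ (more symmetric), $A$ sits between two homothets of its central symmetrization $\tfrac12(A-A)$; comparing that symmetrization to $B_X$ — recalling that a complete set of diameter $1$ contains a translate of $\tfrac12 B_X$ and is contained in a ball of radius related to $\tfrac{3-\varepsilon}{4-\varepsilon}$ — yields an inclusion $c'+K\subseteq A\subseteq \gamma K + c''$ with $K$ a homothet of $B_X$ and $\gamma\le\frac{2(3-\varepsilon)}{4-\varepsilon}$, so Lemma~\ref{lem:estimation-via-Hausdorff} again gives $\beta_X(A,m)\le\frac{2(3-\varepsilon)}{4-\varepsilon}\beta_X(B_X,m)$. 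In both regimes the bound is at most the $\max$ of the two quantities, hence so is $\beta_X(A,m)$; taking the supremum over $A\in\mathcal{C}_X$ and then the infimum over $\varepsilon\in(0,1/3)$ gives the claim.

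The main obstacle I anticipate is the geometric bookkeeping in the ``almost a simplex'' regime: one needs the precise quantitative statement that $s(A)\ge 3-\varepsilon$ (or the analogous condition) forces $A$ to be squeezed between a simplex $T$ and $(1+\tfrac{4\varepsilon}{1-3\varepsilon})T$ up to translation, with exactly those constants. This requires choosing $T$ correctly (presumably the simplex spanned by appropriate contact points coming from the symmetry-measure optimizer) and tracking barycentric coordinates carefully — the kind of computation done locally in the proofs of Proposition~\ref{prop:simplex} and Proposition~\ref{prop:simplex-9}. The symmetric regime should be comparatively routine once the diameter-to-inradius and diameter-to-circumradius estimates for complete sets in a general three-dimensional $X$ are pinned down. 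Finally I would check that the two estimates on $\gamma$ are set up so that the crossover value of $\varepsilon$ is genuinely interior to $(0,1/3)$, so the $\min$ is meaningful and not attained trivially at an endpoint.
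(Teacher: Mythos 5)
Your overall architecture coincides with the paper's: reduce to complete sets, dichotomize according to how close $K$ is to a simplex, use Lemma \ref{lem:estimation-via-Hausdorff} in the simplex regime, and a circumball covering in the symmetric regime. But the two quantitative inputs that actually produce the stated constants are missing. The first is the one you flag as ``the main obstacle'': that large Minkowski asymmetry forces proximity to a simplex with ratio exactly $1+\frac{4\varepsilon}{1-3\varepsilon}$. This is not something to rederive by tracking barycentric coordinates; it is precisely the stability theorem for the simplex case of the inequality $s(K)\leq n$ (Theorem 2.1 of Schneider's paper cited in the bibliography), which the paper invokes in contrapositive form: if the Banach--Mazur distance from $K$ to simplices is at least $1+\frac{4\varepsilon}{1-3\varepsilon}$, then $s(K)\leq 3-\varepsilon$. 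Without citing or proving that statement, Case 1 of your argument is an assertion, not a proof.

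The second gap is in the symmetric regime, and you do not flag it. Your claim that a complete set of diameter $1$ contains a translate of $\frac12 B_X$ is false in general: for complete sets one has $r(K)+R(K)=\DIAM{K}$ and $R(K)\geq\frac12\DIAM{K}$, so $r(K)\leq\frac12\DIAM{K}$ with equality only when $K$ is a ball (a Reuleaux triangle already violates it). If you instead run Lemma \ref{lem:estimation-via-Hausdorff} honestly with the inball and circumball, the ratio you get is $\gamma=R(K)/r(K)=s(K)\leq 3-\varepsilon$, which yields only $(3-\varepsilon)\beta_X(B_X,m)$ --- strictly worse than $\frac{2(3-\varepsilon)}{4-\varepsilon}\beta_X(B_X,m)$. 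The paper gets the correct constant differently: it uses the identity $s(K)=\frac{R(K)/\DIAM{K}}{1-R(K)/\DIAM{K}}$ for complete sets (Brandenberg--Gonz\'alez Merino) to convert $s(K)\leq 3-\varepsilon$ into $R(K)\leq\frac{3-\varepsilon}{4-\varepsilon}\DIAM{K}$, and then covers the circumball $R(K)B_X$ directly by $m$ pieces of diameter at most $2R(K)\beta_X(B_X,m)$; the factor $2$ is simply $\DIAM{B_X}$, not an inradius comparison. You need either that identity or some other argument pinning $R(K)/\DIAM{K}$ to $\frac{3-\varepsilon}{4-\varepsilon}$; the symmetrization route as you describe it does not supply it.
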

\begin{proof}
  Let $K$ be a complete set in $X$, $\varepsilon$ be a number in $\qty(0,\frac{1}{3})$. We distinguish two cases.

  \textbf{Case 1}. The Banach-Mazur distance from $K$ to three-dimensional simplices is bounded from the above by
  \begin{displaymath}
    1+\frac{4\varepsilon}{1-3\varepsilon}.
  \end{displaymath}
  Then there exist a tetrahedron $T$ and a point $c\in\R^3$ such that
  \begin{displaymath}
    T\subset K\subset \qty(1+\frac{4\varepsilon}{1-3\varepsilon}) T+c. 
  \end{displaymath}
  By Lemma \ref{lem:estimation-via-Hausdorff}, we have
  \begin{displaymath}
    \beta_X(K,m)\leq \qty(1+\frac{4\varepsilon}{1-3\varepsilon})\eta.
  \end{displaymath}

  \textbf{Case 2}. The Banach-Mazur distance from $K$ to three-dimensional
  simplex is at least
  \begin{displaymath}
    1+\frac{4\varepsilon}{1-3\varepsilon}.
  \end{displaymath}
  From Theorem 2.1 in \cite{Schneider2009}, it follows that
  \begin{displaymath}
    s(K)\leq 3-\varepsilon.
  \end{displaymath}
  Denote by $R(K)$ the circumradius of $K$. Theorem 1.1 in
  \cite{Brandenberg-Merino2017IJM} shows that
  \begin{displaymath}
    s(K)=\frac{R(K)/\DIAM{K}}{1-R(K)/\DIAM{K}}.
  \end{displaymath}
  It follows that
  \begin{displaymath}
    \frac{R(K)}{\DIAM{K}}\leq \frac{3-\varepsilon}{4-\varepsilon}.
  \end{displaymath}
  By a suitable translation if necessary, we may assume that
  \begin{displaymath}
    K\subseteq\frac{3-\varepsilon}{4-\varepsilon}\DIAM{K} B_X.
  \end{displaymath}
  For each $\gamma>0$, there exists a collection $\SET{B_i}{i\in[8]}$ such that
  \begin{displaymath}
    B_X=\bigcup_{i\in[8]}B_i\qqtext{and} \DIAM{B_i}\leq 2\beta_X(B_X,m)+\gamma,~\forall
    i\in[m].
  \end{displaymath}
  It follows that
  \begin{displaymath}
    \beta_X(K,m)\leq \frac{2(3-\varepsilon)}{4-\varepsilon}\beta_X(B_X,m)+\frac{3-\varepsilon}{4-\varepsilon}\gamma.
  \end{displaymath}
  Hence
  \begin{displaymath}
    \beta_X(K,m)\leq \frac{2(3-\varepsilon)}{4-\varepsilon}\beta_X(B_X,m).
  \end{displaymath}
  This completes the proof.
\end{proof}

\begin{cor}\label{cor:estimation-via-ball}
  Let $X=(\R^3,\norm{\cdot})$. If $\beta_X(B_X,8)<\frac{221}{328}$, then $\beta(X,8)<1$.
\end{cor}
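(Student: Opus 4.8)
The plan is to derive Corollary~\ref{cor:estimation-via-ball} from the preceding theorem by making an explicit, near-optimal choice of the parameter $\varepsilon\in(0,1/3)$. Recall that the theorem gives, for $m=8$,
\begin{displaymath}
  \beta(X,8)\leq\min_{\varepsilon\in(0,1/3)}\max\qty{\qty(1+\tfrac{4\varepsilon}{1-3\varepsilon})\eta,\ \tfrac{2(3-\varepsilon)}{4-\varepsilon}\beta_X(B_X,8)},
\end{displaymath}
where $\eta=\sup\SET{\beta_X(T,8)}{T\text{ is a simplex in }\R^3}$. By Proposition~\ref{prop:simplex} we have the norm-independent bound $\eta\leq\frac{9}{16}$. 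So the first step is to substitute $\eta\leq\frac{9}{16}$ into the first term of the max, reducing the problem to showing that there exists $\varepsilon\in(0,1/3)$ making both
\begin{displaymath}
  \qty(1+\tfrac{4\varepsilon}{1-3\varepsilon})\cdot\tfrac{9}{16}<1
  \qqtext{and}
  \tfrac{2(3-\varepsilon)}{4-\varepsilon}\beta_X(B_X,8)<1.
\end{displaymath}

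The second step is to identify the threshold on $\beta_X(B_X,8)$. The first inequality $\qty(1+\frac{4\varepsilon}{1-3\varepsilon})\frac{9}{16}<1$ is equivalent to $1+\frac{4\varepsilon}{1-3\varepsilon}<\frac{16}{9}$, i.e. $\frac{4\varepsilon}{1-3\varepsilon}<\frac{7}{9}$, i.e. $36\varepsilon<7(1-3\varepsilon)=7-21\varepsilon$, i.e. $57\varepsilon<7$, i.e. $\varepsilon<\frac{7}{57}$. The second inequality requires $\beta_X(B_X,8)<\frac{4-\varepsilon}{2(3-\varepsilon)}$; since $\frac{4-\varepsilon}{2(3-\varepsilon)}$ is a decreasing function of $\varepsilon$ on $(0,1/3)$, the best bound on $\beta_X(B_X,8)$ that this argument can reach is obtained in the limit $\varepsilon\to\frac{7}{57}$, giving $\frac{4-7/57}{2(3-7/57)}=\frac{228-7}{2(171-7)}=\frac{221}{328}$. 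Thus if $\beta_X(B_X,8)<\frac{221}{328}$, then by continuity one can pick $\varepsilon<\frac{7}{57}$ close enough to $\frac{7}{57}$ so that $\beta_X(B_X,8)<\frac{4-\varepsilon}{2(3-\varepsilon)}$ still holds; for that $\varepsilon$, both terms in the max are strictly less than $1$, so $\beta(X,8)<1$.

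The third step is just to write this up cleanly: fix $\beta_X(B_X,8)<\frac{221}{328}$, observe that $\frac{4-\varepsilon}{2(3-\varepsilon)}\to\frac{221}{328}$ as $\varepsilon\uparrow\frac{7}{57}$, choose $\varepsilon_0\in(0,\frac{7}{57})$ with $\beta_X(B_X,8)<\frac{4-\varepsilon_0}{2(3-\varepsilon_0)}$, and conclude that
\begin{displaymath}
  \max\qty{\qty(1+\tfrac{4\varepsilon_0}{1-3\varepsilon_0})\tfrac{9}{16},\ \tfrac{2(3-\varepsilon_0)}{4-\varepsilon_0}\beta_X(B_X,8)}<1,
\end{displaymath}
hence $\beta(X,8)<1$. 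There is essentially no obstacle here: the whole content is in the preceding theorem and in Proposition~\ref{prop:simplex}, and this corollary is a routine optimization over $\varepsilon$. The only mild care needed is the monotonicity check on $\frac{4-\varepsilon}{2(3-\varepsilon)}$ and the arithmetic confirming that the two constraints meet exactly at the stated constant $\frac{221}{328}$ when $\varepsilon=\frac{7}{57}$; I would present that arithmetic explicitly so the reader sees where $\frac{221}{328}$ comes from.
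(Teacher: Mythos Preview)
Your approach is the same as the paper's: use Proposition~\ref{prop:simplex} to replace $\eta$ by $\tfrac{9}{16}$, solve the first constraint to get $\varepsilon<\tfrac{7}{57}$, and then invoke continuity of $\tfrac{2(3-\varepsilon)}{4-\varepsilon}$ to choose $\varepsilon_0$ just below $\tfrac{7}{57}$.

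One slip to fix: the function $\dfrac{4-\varepsilon}{2(3-\varepsilon)}$ is \emph{increasing} on $(0,\tfrac13)$, not decreasing (its derivative is $\dfrac{1}{2(3-\varepsilon)^2}>0$; at $\varepsilon=0$ it equals $\tfrac{2}{3}$, at $\varepsilon=\tfrac{7}{57}$ it equals $\tfrac{221}{328}$). This is precisely why the supremum of $\dfrac{4-\varepsilon}{2(3-\varepsilon)}$ over $\varepsilon\in(0,\tfrac{7}{57})$ is $\tfrac{221}{328}$, attained in the limit $\varepsilon\uparrow\tfrac{7}{57}$, and why your continuity argument works; had the function been decreasing, the threshold would have been $\tfrac{2}{3}$ instead. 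With that correction your write-up is fine.
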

\begin{proof}
  Since $\frac{2(3-\frac{7}{57})}{4-\frac{7}{57}}\frac{221}{328}=1$ and
  \begin{displaymath}
    \frac{2(3-\varepsilon)}{4-\varepsilon}=2-\frac{2}{4-\varepsilon}
  \end{displaymath}
  is continuous with respect to $\varepsilon$ on $(0,\frac{1}{3})$, there exists
  a number $\varepsilon_0<\frac{7}{57}$ such that
  \begin{displaymath}
    \frac{2(3-\varepsilon_0)}{4-\varepsilon_0}\beta_X(B_X,8)<1
  \end{displaymath}
  It follows that
  \begin{displaymath}
    \beta(X,8)\leq\max\qty{\qty(1+\frac{4\varepsilon_0}{1-3\varepsilon_0})\frac{9}{16},\frac{2(3-\varepsilon_0)}{4-\varepsilon_0}\beta_X(B_X,8)}<1.\qedhere
  \end{displaymath}
\end{proof}

In particular, Corollary \ref{cor:estimation-via-ball} shows that
$\beta(l_1^3,8)<1$ since the unit ball of $l_1^3$ can be covered by $8$ balls
having radius $\frac{2}{3}<\frac{221}{328}$. By solving the optimization problem
\begin{displaymath}
  \min_{\varepsilon\in(0,1/3)}\max\qty{\qty(1+\frac{4\varepsilon}{1-3\varepsilon})\frac{9}{16},\frac{2(3-\varepsilon)}{4-\varepsilon}\frac{2}{3}},
\end{displaymath}
one can show that $\beta(l_1^3,8)\leq 0.989\ldots$. This estimation can be
improved, see the next section.

\section{$\beta\qty(l_p^3,8)$}
\label{sec:lp}

\begin{lem}\label{lem:BM-distance}
  For each $p\in[1,2]$, $d_{BM}^M\qty(l_p^3,l_\infty^3)\leq \frac{\sqrt{18\cdot
      19}}{10}\approx 1.85$.
\end{lem}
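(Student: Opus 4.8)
The plan is to bound the multiplicative Banach–Mazur distance $d_{BM}^M(l_p^3, l_\infty^3)$ for $p\in[1,2]$ by comparing the unit balls directly, using a single well-chosen linear map. Since both $B_p^3$ and $B_\infty^3$ are symmetric about the origin, it suffices to exhibit a nonsingular $T$ and a constant $\gamma$ with $T(B_\infty^3)\subseteq B_p^3\subseteq \gamma\,T(B_\infty^3)$, and then $d_{BM}^M(l_p^3,l_\infty^3)\le\gamma$. The natural first attempt is $T=\mathrm{id}$: one has $\frac{1}{3^{1/p}}B_\infty^3\subseteq B_p^3\subseteq B_\infty^3$ (the left inclusion because $\norm{x}_p\le 3^{1/p}\norm{x}_\infty$, the right because $\norm{x}_\infty\le\norm{x}_p$), giving $\gamma=3^{1/p}\le 3$; but this is too weak. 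The sharp constant $\sqrt{18\cdot 19}/10$ suggests instead scaling $B_\infty^3$ by an intermediate factor $\rho$, i.e. taking $T=\rho\,\mathrm{id}$, so that we need $\rho B_\infty^3\subseteq B_p^3$ and $B_p^3\subseteq \gamma\rho B_\infty^3$, and then optimize $\gamma=\frac{1}{\rho}\cdot\frac{1}{(\text{inradius-type quantity})}$ over $\rho$; the worst case of $p$ will be $p=1$, where $B_1^3$ is the regular octahedron.

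Concretely, for $p=1$ I would compute the two extremal ratios between the cube $B_\infty^3$ and the cross-polytope $B_1^3$. Inscribing a scaled cube $\rho B_\infty^3$ in $B_1^3$ forces $\rho\le \frac{1}{3}$ (a vertex $(\rho,\rho,\rho)$ must satisfy $3\rho\le 1$), while inscribing a scaled cube around $B_1^3$, i.e. $B_1^3\subseteq s B_\infty^3$, forces $s\ge 1$. Taking $T=\frac13\mathrm{id}$ would give $\gamma = 3$ again, which is still not the claimed bound — so the right move is a non-homothetic $T$, namely one adapted to the symmetry mismatch, and the factor $\frac{\sqrt{18\cdot 19}}{10}=\frac{\sqrt{342}}{10}\approx 1.8493$ is almost certainly obtained by an explicit rotation/shear $T$ chosen so that $T(B_\infty^3)$ is a parallelepiped fitting tightly between $B_p^3$ and a dilate of itself. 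I would write $\sqrt{342}/10 = \sqrt{3.42}$ and look for $T$ with $\det$-normalization making the inner and outer radii have ratio exactly $\sqrt{3.42}$; a cube rotated so that a space-diagonal aligns with a coordinate axis is the canonical candidate, since then its silhouette matches the octahedral/cubical symmetry of $B_p^3$ much better than the axis-aligned cube does.

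The key steps, in order, are: (i) reduce to the symmetric-bodies form of $d_{BM}^M$ and to the single inequality $T(B_\infty^3)\subseteq B_p^3\subseteq\gamma T(B_\infty^3)$; (ii) observe by interpolation/monotonicity that it suffices to treat the endpoint $p=1$ (for $1<p\le 2$ one has $B_1^3\subseteq B_p^3\subseteq B_2^3\subseteq B_\infty^3$-type nestings that only improve the constant, so the supremum over $p\in[1,2]$ is attained at $p=1$); (iii) exhibit the explicit linear map $T$ — a suitably scaled rotation of the cube aligning a body-diagonal with a face-normal direction of the octahedron — and verify the two inclusions by checking vertices of $T(B_\infty^3)$ lie in $B_1^3$ and vertices of $B_1^3$ lie in $\gamma T(B_\infty^3)$; (iv) compute $\gamma = \sqrt{18\cdot 19}/10$ from those vertex computations. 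I expect step (iii) — pinning down the optimal $T$ and doing the vertex verification cleanly — to be the main obstacle, since it requires guessing the right alignment of the cube; once $T$ is fixed, the inclusions reduce to finitely many linear inequalities at the $8$ cube-vertices and $6$ octahedron-vertices, which is routine arithmetic yielding the constant $\sqrt{342}/10\approx 1.85$.
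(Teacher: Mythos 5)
Your high-level strategy --- sandwich a parallelepiped image $T(B_\infty^3)$ between $B_p^3$ and a dilate of itself and read off $\gamma$ --- is indeed the paper's strategy, but your proposal has two genuine gaps that prevent it from being a proof. The first is step (ii): the claim that the supremum over $p\in[1,2]$ is attained at $p=1$ ``by interpolation/monotonicity'' because of nestings like $B_1^3\subseteq B_p^3\subseteq B_\infty^3$ is unjustified. Banach--Mazur distance is not monotone under inclusion of unit balls, and no such reduction is available off the shelf; you would need either a uniform construction valid for every $p\in[1,2]$ or a separate argument for each $p$. The paper does the former: it fixes one parallelepiped $Q$ (spanned by $c_1=(3,3,-2)$, $c_2=(-2,3,3)$, $c_3=(3,-2,3)$) and proves the $p$-dependent bound $d_{BM}^M(l_p^3,l_\infty^3)\leq\frac{1}{10}\norm{(1,1,4)}_p\norm{(3,1,3)}_q$, which it then maximizes over $p\in[1,2]$. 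Notably that maximum occurs at $p=2$, not $p=1$: at $p=1$ the bound is $\frac{6\cdot 3}{10}=1.8$, while at $p=2$ it is $\frac{\sqrt{18}\sqrt{19}}{10}\approx 1.849$. So your guiding intuition --- that the constant $\sqrt{342}/10$ arises from vertex computations against the octahedron $B_1^3$ --- points you at the wrong endpoint; the constant is $\norm{(1,1,4)}_2\norm{(3,1,3)}_2/10$, i.e.\ the Euclidean case of the estimate.

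The second gap is that the linear map $T$ is never produced. Steps (iii)--(iv) are the entire content of the lemma, and you acknowledge that pinning down $T$ is ``the main obstacle''; a rotation aligning a body diagonal of the cube with a coordinate axis is a reasonable guess but is not verified, and it is not what the paper uses (the paper's $Q$ is not a rotated cube in general --- the inner and outer homothets of $Q$ are calibrated via the $\ell_p$-norm of the vertex $(−2,8,−2)$ and the $\ell_q$-norm of the facet functionals, respectively). Without the explicit $T$ and the finitely many inclusion checks, and without a correct handling of the dependence on $p$, the proposal is an outline of the right kind of argument rather than a proof.
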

\begin{proof}
  Put $c_1=(3,3,-2)$, $c_2=(-2,3,3)$, $c_3=(3,-2,3)$. Denote by $Q$ the
  parallelipiped having
  \begin{displaymath}
    \SET{\sum\limits_{i\in[3]}\sigma_ic_i}{\sigma_i\in\qty{-1,1},~\forall i\in[3]}
  \end{displaymath}
  as the set of vertices. We have
  \begin{align*}
    \max\SET{\norm{\sum\limits_{i\in[3]}\sigma_ic_i}_p}{i\in[3]}&=\max\qty{\norm{(4,4,4)}_p,\norm{(-2,8,-2)}_p,\norm{(8,-2,-2)}_p,\norm{(2,2,-8)}_p}\\
                                                                &=\max\qty{\norm{(4,4,4)}_p,\norm{(-2,8,-2)}_p}\\
                                                                &=\norm{(-2,8,-2)}_p=2\norm{(-1,4,-1)}_p=2\norm{(1,1,4)}_p.
  \end{align*}
  It follows that
  \begin{displaymath}
    \frac{1}{2\norm{(1,1,4)}_p}Q\subset B_p^3,
  \end{displaymath}
  where $B_p^3$ is the unit ball of $l_p^3$. Let $q$ be the number satisfying
  \begin{displaymath}
    \frac{1}{p}+\frac{1}{q}=1,
  \end{displaymath}
  and let $f_1$, $f_2$, $f_3$ be linear functionals defined on $l_p^3$ such
  that, for any $(\alpha,\beta,\gamma)\in \R^3$,
  \begin{gather*}
    f_1((\alpha,\beta,\gamma))=\frac{1}{100}\qty(15\alpha-5\beta+15\gamma),\\
    f_2((\alpha,\beta,\gamma))=\frac{1}{100}\qty(-5\alpha+15\beta+15\gamma),\\
    f_3((\alpha,\beta,\gamma))=\frac{1}{100}\qty(15\alpha+15\beta-5\gamma).
  \end{gather*}
  Then
  \begin{gather*}
    c_3+\SPAN\qty{c_1,c_2}=\SET{x\in\R^3}{f_1(x)=1},\\
    c_2+\SPAN\qty{c_1,c_3}=\SET{x\in\R^3}{f_2(x)=1},\\
    c_1+\SPAN\qty{c_2,c_3}=\SET{x\in\R^3}{f_3(x)=1}.
  \end{gather*}
  Thus the distances from the origin $\origin$ to the facets of $Q$ all equals
  to
  \begin{displaymath}
    \frac{100}{\norm{(15,-5,15)}_q}.
  \end{displaymath}
  It follows that
  \begin{displaymath}
    \frac{1}{2\norm{(1,1,4)}_p}Q\subset B_p^3\subset \frac{\norm{(15,-5,15)}_q}{100}Q=\frac{\norm{(1,1,4)}_p\norm{(3,1,3)}_q}{10}\frac{1}{2\norm{(1,1,4)}_p}Q,
  \end{displaymath}
  which implies that
  \begin{displaymath}
    d_{BM}^M(l_p^3,l_\infty^3)\leq
    \frac{\norm{(1,1,4)}_p\norm{(3,1,3)}_q}{10}\leq
    \frac{\norm{(1,1,4)}_2\norm{(3,1,3)}_2}{10}=\frac{\sqrt{18\cdot 19}}{10}.\qedhere
  \end{displaymath}
\end{proof}
\begin{rem}
  The last inequality in Lemma \ref{lem:BM-distance} can be verified in the
  following way. Put
  \begin{displaymath}
    f(p)=\qty(4^p + 2)^{\frac{1}{p}}\cdot \qty(2\cdot 3^{\frac{p}{p-1}} + 1)^{\frac{p-1}{p}}.
  \end{displaymath}
  Numerical results show that $f'(p)=0$ has a unique solution $p_0\approx 1.320$
  in $[1,2]$, and
  \begin{displaymath}
    f(p_0)\approx 17.550<f(2).
  \end{displaymath}
  Moreover, $f(1)<f(2)$. Thus $f(p)$ is maximized at $p=2$.
  \par

  Numerical results show that when $p\in[1,1.736)$,
  \begin{displaymath}
    \frac{\norm{(1,1,4)}_p\norm{(3,1,3)}_q}{10}\leq \frac{9}{5}.
  \end{displaymath}
  The estimation in Lemma \ref{lem:BM-distance} could be improved by
  choosing points $c_1$, $c_2$, $c_3$ more carefully for different $p\in[1,2]$.
\end{rem}

\begin{thm}
We have the following estimation:
\begin{displaymath}
  \beta\qty(l_p^3,8)\leq
  \begin{cases}
    \frac{\sqrt{18\cdot 19}}{20},&p\in[1,2),\\
    \frac{3^{1/p}}{2},&p\in[2,+\infty].
  \end{cases}
\end{displaymath}
\end{thm}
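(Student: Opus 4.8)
The plan is to split into the two ranges of $p$ and handle each by reducing to a known estimate via the stability results of Sections 2 and 3. For $p\in[1,2)$, the idea is to invoke Lemma~\ref{lem:BM-distance}, which gives $d_{BM}^M(l_p^3,l_\infty^3)\leq\frac{\sqrt{18\cdot19}}{10}$. Combining this with the stability theorem, Theorem~\ref{thm:stability}, yields
\begin{displaymath}
  \beta(l_p^3,8)\leq \frac{\sqrt{18\cdot19}}{10}\,\beta(l_\infty^3,8).
\end{displaymath}
Since $l_\infty^3$ is three-dimensional, the proposition computing $\beta(l_\infty^n,2^n)=\frac12$ applies with $n=3$, giving $\beta(l_\infty^3,8)=\frac12$. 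Substituting, $\beta(l_p^3,8)\leq\frac{\sqrt{18\cdot19}}{10}\cdot\frac12=\frac{\sqrt{18\cdot19}}{20}$, which is exactly the claimed bound on $[1,2)$. (One should double-check $\frac{\sqrt{18\cdot19}}{20}<1$, i.e. $18\cdot19=342<400$, so the bound is genuinely nontrivial.)

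For $p\in[2,+\infty]$ the plan is to compare $l_p^3$ directly to $l_\infty^3$ again, but now exploiting the explicit nesting of $\ell_p$ balls rather than the generic Banach–Mazur estimate. For $x\in\R^3$ one has $\norm{x}_\infty\leq\norm{x}_p\leq 3^{1/p}\norm{x}_\infty$, hence $B_\infty^3\subseteq B_p^3\subseteq 3^{1/p}B_\infty^3$ after the identity transformation; equivalently $d_{BM}^M(l_p^3,l_\infty^3)\leq 3^{1/p}$. Again applying Theorem~\ref{thm:stability} with $\gamma=3^{1/p}$ and $\beta(l_\infty^3,8)=\frac12$ gives $\beta(l_p^3,8)\leq 3^{1/p}\cdot\frac12=\frac{3^{1/p}}{2}$, the claimed bound on $[2,+\infty]$. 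At $p=\infty$ this reads $\beta(l_\infty^3,8)\leq\frac12$, consistent with the exact value, and at $p=2$ it gives $\beta(l_2^3,8)\leq\frac{\sqrt3}{2}\approx0.866$.

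The only real obstacle is the input Lemma~\ref{lem:BM-distance}, which is where all the genuine work sits — choosing the well-placed parallelepiped $Q$ with vertices $\pm c_1\pm c_2\pm c_3$ and checking the two inclusions $\frac{1}{2\norm{(1,1,4)}_p}Q\subset B_p^3\subset\frac{\norm{(15,-5,15)}_q}{100}Q$ — but that lemma is already established, so here it may be quoted directly. The remaining steps are: (i) verify the two-sided norm comparison for $\ell_p$ versus $\ell_\infty$ on $\R^3$ for $p\geq 2$, which is elementary; (ii) recall $\beta(l_\infty^3,8)=\frac12$; and (iii) assemble the two cases. I would also remark that the $p\in[1,2)$ bound and the $p\in[2,\infty]$ bound agree up to a harmless overlap issue at $p=2$ (the first gives $\approx0.924$, the second gives $\approx0.866$), so the piecewise definition is consistent; and that since $\frac{\sqrt{18\cdot19}}{20}<0.925$, the theorem immediately yields the headline estimate $\beta(l_p^3,8)\leq 0.925$ for all $p\in[1,+\infty]$ quoted in the abstract.
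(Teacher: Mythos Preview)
Your proposal is correct and follows essentially the same route as the paper: for $p\in[2,\infty]$ the paper invokes the known value $d_{BM}^M(l_p^3,l_\infty^3)=3^{1/p}$ (citing Tomczak--Jaegermann) where you supply the elementary norm comparison, and for $p\in[1,2)$ both arguments combine Lemma~\ref{lem:BM-distance} with Theorem~\ref{thm:stability} and $\beta(l_\infty^3,8)=\tfrac12$. One small slip: from $\norm{x}_\infty\le\norm{x}_p\le 3^{1/p}\norm{x}_\infty$ the correct inclusion is $B_p^3\subseteq B_\infty^3\subseteq 3^{1/p}B_p^3$, not the one you wrote, though the Banach--Mazur bound $d_{BM}^M(l_p^3,l_\infty^3)\le 3^{1/p}$ follows either way.
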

\begin{proof}
  First we consider the case when $p\in[2,+\infty]$. By Proposition 37.6 in
  \cite{Jaegermann1989}, $d_{BM}^M(l_p^3,l_\infty^3)=3^{1/p}$, this together
  with Theorem \ref{thm:stability}, implies that $\beta(l_p^3,8)\leq
  \frac{3^{1/p}}{2}\leq \frac{\sqrt{3}}{2}$.

The case when $p\in[1,2]$ follows directly from Lemma \ref{lem:BM-distance} and
Theorem \ref{thm:stability}.
\end{proof}

\section{Acknowledgements}
\label{sec:ack}
This work is supported by the National Natural Science Foundation of China
(grant numbers: 11921001 and 12071444), the National Key Research and
Development Program of China (2018YFA0704701), and the Natural Science
Foundation of Shanxi Province of China (201901D111141). The authors are grateful
to Professor C. Zong for his supervision and discussion.

\bibliographystyle{amsplain}

\bibliography{../xbib/references}

\end{document}